\documentclass[11pt]{article}
\usepackage{amsmath, amsthm, amssymb, enumerate, tikz}
\usepackage[hidelinks]{hyperref}

\newtheorem{theorem}{Theorem}[section]
\newtheorem{lemma}[theorem]{Lemma}
\newtheorem{corollary}[theorem]{Corollary}
\newtheorem{claim}[theorem]{Claim}
\newtheorem{observation}[theorem]{Observation}

\title{Removable matchings in $2$-connected graphs}
\author{Ringi Kim\thanks{Department of Mathematics, Inha University, Incheon 22212, Republic of Korea. E-mail: {\tt ringikim@inha.ac.kr}.}
}
\date{}
\begin{document}
\maketitle
\begin{abstract}

A matching $M$ of a $2$-connected graph $G$ is \emph{removable} if
$G-M$ is $2$-connected, extending Halin's classical notion of a
removable edge. We prove that for every integer $d\ge 5$, every
$2$-connected graph $G$ with $\delta(G)\ge d$ and $|V(G)|\ge 2d$ has a
removable $d$-matching. This is best possible, since the complete bipartite graph $K_{d,\,n-d}$ on $n\,(\ge 2d)$ vertices has no $(d+1)$-matching.
Consequently, our result gives a complete answer, for every $d\ge 5$,
to a question of Li, Zhou, Fujita, and Mao on the maximum size of a
removable matching guaranteed by the minimum degree. The previously
best known bound, due to Li, Zhou, Fujita, and Mao and to Chu, Kim,
and Park, guaranteed a removable $(d-2)$-matching under the same
assumptions. Our proof is based on an analysis of \emph{minimal
non-removable matchings}, matchings that are not removable although
all of their proper submatchings are.
\end{abstract}

\smallskip
\noindent\textbf{MSC (2020):} 05C40; 05C70.

%=======================================================================
\section{Introduction}
%=======================================================================
All graphs in this paper are finite, undirected, and simple. For a
graph $G$, we denote its vertex and edge sets by $V(G)$ and $E(G)$,
respectively. For a vertex $v\in V(G)$, let $N_G(v)$ and $\deg_G(v)$
denote the neighborhood and the degree of $v$, respectively, and let
$\delta(G)$ denote the minimum degree of $G$. 
For $X\subseteq V(G)$, we write $G-X$ for the graph obtained by deleting the vertices in $X$
and all incident edges, and for $F\subseteq E(G)$, we write $G-F$ for
the graph obtained by deleting the edges in $F$. We abbreviate
$G-\{v\}$ and $G-\{e\}$ as $G-v$ and $G-e$, respectively. Similarly, for $u,v\in V(G)$, if $u$ is not adjacent to $v$, we write $G+uv$ for the graph obtained by adding the edge $uv$ to $G$.
For $X\subseteq V(G)$, we write $G[X]$ for the subgraph of $G$ induced
by $X$.
We denote by $K_n$ and $K_{a,b}$ the complete graph on $n$ vertices
and the complete bipartite graph with parts of sizes $a$ and $b$,
respectively, and write $G\cong H$ if $G$ and $H$ are isomorphic.

An edge $e$ of a $k$-connected graph $G$ is \emph{$k$-removable} if
$G-e$ is $k$-connected. In 1969, Halin~\cite{halin} proved that every
$k$-connected graph $G$ with $\delta(G)\ge k+1$ contains a
$k$-removable edge, and the bound is sharp, since no edge of a
$k$-regular $k$-connected graph is $k$-removable. Halin's theorem
initiated an extensive line of research on \emph{connectivity-keeping
subgraphs}, in which one seeks a prescribed subgraph whose vertices or
edges can be deleted while preserving $k$-connectedness; this topic
has been widely
studied~\cite{ckl,fk,hl,llh,mader-path,mader-tree,yt}, and we refer
the reader to the survey of Tian and Meng~\cite{tm}. In the
edge-deletion direction, Hasunuma~\cite{hasunuma} conjectured that for
every tree $T$ of order $m$, every $k$-connected graph $G$ with
$\delta(G)\ge k+m-1$ contains a copy $T'$ of $T$ such that $G-E(T')$
is $k$-connected, and verified it for $k\le 2$; the conjecture was
recently proved in full by Clay and Jord\'an~\cite{cj}.

A matching $M$ of a $k$-connected graph $G$ is \emph{$k$-removable}
if $G-M$ is $k$-connected. For an integer $\ell\ge 1$, an
\emph{$\ell$-matching} is a matching of size $\ell$. 
We regard the empty set as a matching; note that it is $k$-removable in every $k$-connected graph.
Minimum degree
conditions for $k$-removable matchings were first investigated by Li,
Zhou, Fujita, and Mao~\cite{lzfm}. To measure how large a removable
matching one can guarantee, they introduced the following function:
for integers $\delta>k\ge 1$, let $f(k,\delta)$ denote the largest
integer $m$ such that every $k$-connected graph $G$ with
$|V(G)|\ge 2\delta$ and $\delta(G)\ge\delta$ has a $k$-removable
$m$-matching. Since the complete bipartite graph $K_{\delta,\,n-\delta}$ on $n\,(\ge 2\delta)$ vertices has no
$(\delta+1)$-matching, we have
$f(k,\delta)\le\delta$.

Recently, Chu, Kim, and Park~\cite{ckp} proved that every
$k$-connected graph $G$ on at least $2m$ vertices contains a
$k$-removable $m$-matching if
\[
   \delta(G)\ \ge\
   \begin{cases}
      \max\bigl\{k+\bigl\lceil\tfrac m2\bigr\rceil,\ 2m\bigr\}
         & \text{if } k\ge m,\\[2pt]
      k+m & \text{if } k<m,
   \end{cases}
\]
which yields $f(k,\delta)\ge\delta-k$ for every $\delta\ge 2k+1$. 
Very recently, Chu~\cite{chu} confirmed a conjecture of Li, Zhou,
Fujita, and Mao~\cite{lzfm} by determining the sharp minimum degree
threshold for a $k$-removable $m$-matching. In particular, his result
yields $f(k,\delta)\ge\bigl\lceil\tfrac{\delta+1}{2}\bigr\rceil$ for
all $\delta>k\ge 1$ with $(k,\delta)\neq(1,2)$.
For $k=2$, the best known lower bound was $f(2,d)\ge d-2$ for every
$d\ge 5$, due to~\cite{lzfm} for even $d$ and to~\cite{ckp} in general.

In this paper, we close this gap and determine $f(2,d)$ exactly for
every $d\ge 5$. From now on, we abbreviate \emph{$2$-removable} to
\emph{removable}.
\begin{theorem}\label{thm:main}
    Let $d\ge 5$ be an integer and let $G$ be a $2$-connected graph
    with $\delta(G)\ge d$. If $|V(G)|\ge 2d$, then $G$ has a removable
    $d$-matching.
\end{theorem}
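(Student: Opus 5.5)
We argue by contradiction through an extremal choice, following the abstract's strategy of studying minimal non-removable matchings. Let $G$ be a counterexample and, among all removable matchings of $G$, fix one, $M$, of maximum size $m<d$. We first show $m\ge 1$, and more generally that $G-M$ still has large minimum degree: every vertex loses at most one edge, so $\delta(G-M)\ge d-1\ge 4$. Since $|V(G)|\ge 2d$ and $\delta(G)\ge d$, Dirac/Erd\H{o}s--Gallai type arguments show $G$ has a matching of size $d$. Let $N$ be a $d$-matching. We choose $N$ so that $|N\cap M|$ is as large as possible, or more flexibly we extend $M$ one edge at a time. For an edge $e\notin M$ that is vertex-disjoint from $M$, the matching $M\cup\{e\}$ is non-removable exactly when $e$ is a bridge-like edge of the $2$-connected graph $H=G-M$, that is, $H-e$ has a cut vertex. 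So the key object is the set of edges of $H$ that are not removable in $H$.

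The core lemma is a structure theorem for a \emph{minimal non-removable matching} $M'$, one that is non-removable while every proper submatching is removable. If $M'$ is non-removable, $G-M'$ has a cut vertex $x$ or is disconnected. Minimality forces every edge of $M'$ to cross between the components of $G-M'-x$ (respectively of $G-M'$): otherwise that edge could be restored without reconnecting anything, contradicting the removability of the submatching. Hence $G$ has a separation $(A,B)$ with $A\cap B\subseteq\{x\}$ such that all edges between $A\setminus B$ and $B\setminus A$ belong to $M'$. Because the side of the separation $A\setminus\{x\}$ has minimum degree considerations ($\delta\ge d$, each vertex sends at most one matching edge across and at most one edge to $x$), each side has at least $d-1$ vertices. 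I would record this as a lemma: every minimal non-removable matching is an edge cut of $G$ or of $G-x$, and both sides of the cut are large.

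With the lemma in hand, the plan is an exchange argument. Take $M$ maximum removable with $m<d$, and consider $H=G-M$ and the vertices $U$ uncovered by $M$; note $|U|\ge 2d-2m\ge 2$. The goal is to find an edge $e$ with both ends in $U$ that is removable in $H$, or else an augmenting-type swap replacing some $f\in M$ with two edges. If every candidate edge $e$ in $G[U]$ (or more generally every way to enlarge) yields a non-removable matching, then each extension contains a minimal non-removable submatching containing $e$, giving a small cut through $e$. Using $\delta\ge d$, any vertex $u\in U$ has at least $d-m\ge 1$ neighbours outside $V(M)$ or many neighbours in $V(M)$. We then compare two such cuts for different edges and use submodularity/uncrossing of the separations to obtain a side containing few vertices, which contradicts the size bound in the lemma. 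The case analysis splits according to whether $G[U]$ has an edge. If it has none, every vertex of $U$ has all $\ge d$ neighbours in $V(M)$, which has size $2m<2d$; pigeonhole then gives two $U$-vertices adjacent to both ends of some $f\in M$, allowing a swap $f\mapsto$ two edges, and we must check removability of the swap via the lemma.

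I expect the main obstacle to be the uncrossing step. This means showing that when two extensions both fail, the corresponding separations (each of order $\le 1$ plus matching edges) cross in a way that forces a small side. It needs careful counting with $d\ge 5$, and small $d$ is presumably where the counting breaks. The swap case when $U$ is independent is the second delicate point, since $G-M$ is not monotone under exchanges; there I would first try choosing $M$ to additionally maximize the number of edges of $G[V(M)]$ usable, or minimize the number of cut vertices of $G-M-f$.
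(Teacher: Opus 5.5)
Your proposal has a genuine gap: the two steps that would produce the contradiction, the uncrossing step and the swap step, are only announced. The framework around them, extending a maximum removable matching $M$ of $G$, stalls in the critical range.

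\textbf{The uncrossing step.} Your structure lemma gives only lower bounds on the sides of a minimal non-removable matching (roughly $d$ vertices each), while $|V(G)|\ge 2d$. So comparing two such separations produces no side that is too small. What you actually need is a theorem giving a removable edge of $H=G-M$ with both ends outside $V(M)$. The available one is Theorem~\ref{thm:key}, which requires $\deg_H(x)\ge 2+\lceil (w+1)/2\rceil$ for $x\notin W$. With $W=V(M)$ and $w=2m$ this holds only for $m\le d-3$, so extending by an edge of $G[U]$ says nothing when $m\in\{d-2,d-1\}$.

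\textbf{The swap step.} Non-extendability is never enough on its own. Let $G[S]$ have a perfect matching $M$ with $|S|=2m\ge d$, and let $U$ be an independent set of at least $d$ vertices complete to $S$. Then $M$ is removable and no edge of $G$ avoids $V(M)$, yet $m$ can be as small as $\lceil d/2\rceil$. Hence everything rests on your exchange step. There you give no reason why $(M\setminus\{f\})\cup\{ux,u'y\}$ should be removable: nothing in your setup prevents $((G-M)+f)-ux-u'y$ from having a cut vertex.

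\textbf{The missing idea.} Use a minimal non-removable matching as a decomposition device, not as a source of small cuts. The paper takes a minimal non-removable submatching $M$ of an arbitrary $d$-matching and proceeds as follows.
\begin{enumerate}[(1)]
\item By Lemma~\ref{lem:structure}, $H=G-M$ has a path-shaped block tree whose leaf blocks $X_1,X_2$ are $2$-connected, and every edge of $M$ joins their interiors.
\item By Lemma~\ref{lem:assembly}, deleting inside each block an edge set that keeps it $2$-connected, together with all but one edge of $M$, always gives a removable matching.
\item Inside $X_i$ the forbidden set is then only $W_i$ (the $m$ ends of $M$ in $X_i$ plus $x_i$), of size $m+1$ rather than $2m$. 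So Theorem~\ref{thm:key} applies through Lemma~\ref{lem:block} and yields $s_i=2a_i+m+1\ge d$.
\item This at once gives removable matchings $(M\setminus\{u_jv_j\})\cup N_1\cup N_2$ of size at least $d-2$.
\item The remaining cases $(s_1,s_2)\in\{(d,d),(d,d+2),(d+2,d),(d+1,d+1)\}$ are closed by explicit constructions. When $s_i=d$, one adds an edge $pu_1$ or $x_1u_1$. When $s_1=s_2=d+1$, one shows that no edge at $x_1$ is removable in $X_1$. This forces $X_1-x_1$ to have $d_1\ge 3$ leaf blocks, each carrying a removable matching of size at least $\lceil (d-2)/2\rceil$.
\end{enumerate}
Your structure lemma is a coarse version of Lemma~\ref{lem:structure}. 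The change your argument needs is to build the matching block by block from it, rather than to hunt for a small side.
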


Theorem~\ref{thm:main}, together with the upper bound observed above,
immediately determines the exact value of $f(2,d)$.
\begin{corollary}\label{cor:f2d}
    $f(2,d)=d$ for every integer $d\ge 5$.
\end{corollary}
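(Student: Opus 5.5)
We argue by contradiction and work with a carefully chosen extremal object. Suppose $G$ has no removable $d$-matching. Since $\delta(G)\ge d$ and $|V(G)|\ge 2d$, $G$ has a matching of size $d$: a maximum matching $M_0$ of size less than $d$ would, via Gallai--Edmonds or a direct augmenting-path argument, force a vertex of degree less than $d$ or fewer than $2d$ vertices. We choose a removable matching $M$ of maximum size $m<d$, and among these one for which $G-M$ has as few "critical" edges as possible, where an edge is critical if its deletion destroys $2$-connectivity. The abstract suggests the key notion: for a matching $N\supseteq M$ of size $d$ (or $m+1$), take a \emph{minimal non-removable} submatching. Such an $N$ is not removable, yet every proper submatching is. Its structure is strong. $G-N$ has a cut vertex $x$, and every edge $e\in N$ must join two different components (blocks) of $(G-N)-x$ "across" the cut; otherwise $G-(N\setminus\{e\})$ would still have $x$ as a cut vertex. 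So I would first prove a structure lemma. If $N$ is a minimal non-removable matching with $|N|\ge 2$, then $(G-N)-x$ has exactly two components $A,B$, every edge of $N$ has one end in $A$ and one in $B$, and $N$ is exactly the set of $A$--$B$ edges of $G-x$ not incident with $x$. The last assertion uses minimality: if some other $A$--$B$ edge remained, $G-N$ would be connected through it. A variant handles $|N|=1$ and the degenerate case where $G-N$ is disconnected rather than merely having a cut vertex. The latter is impossible for $|N|\ge 2$ when every proper submatching is removable, or it reduces to the same picture with $x$ absent.

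Next I would use the structure lemma to augment. Starting from a maximum removable $M$ with $|M|=m<d$, I pick an edge $e\notin V(M)$-incident edges, or an augmenting alternation, and consider $M+e$. Since $M+e$ is not removable, it contains a minimal non-removable submatching $N\ni e$ with cut vertex $x$ and sides $A,B$. The edge cut between $A$ and $B$ in $G-x$ is a matching, namely a subset of $M+e$, so it has at most $m+1\le d$ edges. Now the degree condition enters. Every vertex $a\in A$ has at least $d$ neighbours, all inside $A\cup\{x\}$ except at most one $N$-edge, so $|A|\ge d-1$, and similarly $|B|\ge d-1$. This gives a sparse-cut configuration. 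The plan is to show that choosing a different edge $e'$, in particular one inside $A$ or inside $B$ away from $V(M)$, or swapping $e$ with an $M$-edge to form $M'=M-f+e$, gives a removable matching of size $m+1$ or a removable $m$-matching with fewer critical edges. The sizes $|A|,|B|\ge d-1$ together with $|N|\le d$ leave enough unmatched vertices on each side to do this. Both sides of the $2$-cut structure $\{x\}$ plus the matching $N$ are highly connected internally, since each side has minimum degree about $d-2$ after deletions, and $d\ge 5$ ensures this is at least $3$. So we can reroute around any new cut.

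I expect the main obstacle to be the interaction of several minimal non-removable submatchings. Different candidate edges $e,e'$ produce different cut vertices $x,x'$ and bipartitions, and one must show these laminar configurations are compatible and eventually force either a vertex of degree less than $d$ or fewer than $2d$ vertices. I would first try to prove an uncrossing lemma: two minimal non-removable matchings inside $M+e$ and $M+e'$ have nested sides. Then I would take $e$ so that the side $A$ is minimal, and count. Inside a minimal $A$, every edge between unmatched vertices must be usable, which forces $|A|$ small, contradicting $|A|\ge d-1$ once $d\ge 5$. The small values of $d$ and the tight cases such as $K_{d,n-d}$ will need separate care, and $d\ge5$ is presumably exactly what this counting requires.
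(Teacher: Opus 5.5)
There is a genuine gap. After the preliminaries, your text is a strategy rather than a proof, and the step that carries all the weight is never supplied.

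The parts that work: the existence of a $d$-matching is Lemma~\ref{lem:min_matching}, and your structure lemma is correct, being a weaker form of Lemma~\ref{lem:structure}. (The ``degenerate'' disconnected case cannot occur at all, since $G-N+e$ is $2$-connected.)

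The parts that are only asserted: that another edge $e'$ or a swap $M-f+e$ yields a removable $(m+1)$-matching or fewer critical edges; the uncrossing lemma; and the final count (``every edge between unmatched vertices must be usable, which forces $|A|$ small''). None of these comes with an argument. The two justifications you do offer are false:
\begin{itemize}
\item Minimum degree at least $3$ does not make a side $2$-connected (two copies of $K_4$ sharing a vertex), so you cannot ``reroute around any new cut''. The sides may contain further cut-vertices of $G-N$, and deleting edges inside a leaf block easily creates new ones.
\item $|A|\ge d-1$ together with $|N|\le d$ does not leave unmatched vertices on each side. If the two blocks of $G-N$ are copies of $K_d$ sharing $x$ and $|N|=d-1$, every vertex of each side is an end of $N$.
\end{itemize}
What is missing is any mechanism that produces removable edges inside a side while avoiding the vertices already committed.

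The paper supplies exactly this mechanism, in three steps.
\begin{itemize}
\item Theorem~\ref{thm:key} gives a removable edge with both ends outside a set $W$ once all outside degrees are at least $2+\lceil (|W|+1)/2\rceil$, i.e.\ $|W|\le 2d-5$. This becomes Lemma~\ref{lem:block}: a maximum removable matching of a leaf block $X_i$ avoiding $W_i=(V(N)\cap V(X_i))\cup\{x_i\}$ has size $a_i$ with $2a_i+|W_i|\ge d$.
\item Lemma~\ref{lem:assembly} shows the following is removable: inside each block of $G-N$, a set of edges whose deletion keeps the block $2$-connected, together with any proper subset of $N$. The reason is that a surviving edge of $N$ joins the interiors of the two leaf blocks (Observation~\ref{obs:leaf_blocks}(b)).
\item This pins $s_i=2a_i+|N|+1$ to $(s_1,s_2)\in\{(d,d),(d,d+2),(d+2,d),(d+1,d+1)\}$. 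The first three cases are closed by finding one extra removable edge in $X_i-N_i$ via Lemma~\ref{lem:bipartite}. In the last case no edge at $x_1$ is removable in $X_1$. This forces $X_1-x_1$ to have $d_1\ge\lceil d/2\rceil\ge 3$ leaf blocks, each carrying a removable matching of size at least $\lceil (d-2)/2\rceil$, and $3\lceil (d-2)/2\rceil\ge d$ for $d\ge 5$.
\end{itemize}

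Your setup could serve as an entry point: $N\subseteq M+e$ is a minimal non-removable matching of size at most $d$, which is all the paper needs. No maximality of $M$, potential function, or uncrossing is required.

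Finally, the corollary is an equality and you never prove $f(2,d)\le d$. Note that $K_{d,n-d}$ with $n\ge 2d$ is $2$-connected, has minimum degree $d$, and has no $(d+1)$-matching.
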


The remaining cases $d\in\{3,4\}$ will be treated in a forthcoming
paper.

The key notion in our proof is the use of a minimal non-removable
matching. We say a matching $M$ of a $2$-connected graph $G$ is a
\emph{minimal non-removable matching} if $M$ is not removable but
$M\setminus \{e\}$ is removable for every $e\in M$.
A counterexample $G$ to
Theorem~\ref{thm:main} would contain a minimal non-removable matching
$M$ of size at most $d$, and the structure of $H=G-M$ turns out to
be severely restricted (Lemma~\ref{lem:structure}).

We remark that, very recently, Clay and Jord\'an~\cite{cj} also
studied removable matchings, mainly in $k$-edge-connected graphs:
they proved, among other results, that every $k$-edge-connected graph
$G$ on at least $2m$ vertices with $\delta(G)\ge k+m$ contains an
$m$-matching $M$ such that $G-M$ is $k$-edge-connected. 
Their results and ours are independent.

The paper is organized as follows. Section~\ref{sec:prelim} collects
the terminology, the known results, and auxiliary lemmas used
throughout the paper. Section~\ref{sec:small} deals with $2$-connected graphs $G$ with $|V(G)|\le 2\delta(G)-1$. Section~\ref{sec:minimal} studies minimal
non-removable matchings and establishes the structural lemmas
described above. Section~\ref{sec:proof} is devoted to the proof of
Theorem~\ref{thm:main}.

%=======================================================================
\section{Preliminaries and auxiliary lemmas}\label{sec:prelim}
%=======================================================================

We use the following special case ($k=2$) of a result of Chu, Kim,
and Park~\cite{ckp}.
\begin{theorem}[{\cite[Theorem 3.1]{ckp}}]\label{thm:key}
Let $H$ be a $2$-connected graph, and let $W$ be a possibly empty
subset of $V(H)$ with $|W|=w$. Suppose that $E(H-W)\neq\emptyset$.
If $\deg_H(x)\ge 2+\bigl\lceil\tfrac{w+1}{2}\bigr\rceil$ for every
$x\in V(H)\setminus W$, then $H$ has a removable edge with both ends
in $V(H)\setminus W$.
\end{theorem}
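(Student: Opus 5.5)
The approach is induction on $w=|W|$ and on $|V(H)|$, using the standard theory of $2$-connected graphs with no removable edge among a prescribed set. Suppose no edge $xy$ with $x,y\notin W$ is removable.

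\emph{Step 1: every such edge lies in a separating pair.} For each edge $e=xy$ with $x,y\in V(H)\setminus W$, the graph $H-e$ is not $2$-connected. Since $H$ is $2$-connected, $H-e$ has a cut vertex $z$. Thus $\{z,x\}$, and symmetrically $\{z,y\}$, is a $2$-separation of $H$ that splits $e$ in the appropriate sense. In particular $H-e-z$ splits into a part $A\ni x$ and a part $B\ni y$ with $V(A)\cup V(B)\cup\{z\}=V(H)$.

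\emph{Step 2: choose an extremal edge.} Among all edges $e\in E(H-W)$ and all such decompositions, pick one with $|V(A)|$ minimal, where $A$ is the side containing $x$. The vertex $x$ has at least $2+\lceil (w+1)/2\rceil\ge 3$ neighbours, and all of them lie in $A\cup\{z,y\}$. So $A$ contains neighbours of $x$, and at least $\deg_H(x)-2$ of them are inside $A$. We now aim to show that $A\setminus\{x\}$ contains a vertex outside $W$ that is adjacent to another non-$W$ vertex, or to $x$. This would give an edge $e'$ of $H-W$ inside $A\cup\{x,z\}$. Its separation, obtained again by Step 1, then produces a strictly smaller side by the usual crossing argument for $2$-separations (Mader/Halin style), which contradicts minimality.

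\emph{Step 3: counting against $W$.} The crossing argument fails only when the small side $A$ has nearly all of its vertices in $W$. In that case we consider $H'$, obtained from $H$ by contracting $B\cup\{z\}$ appropriately, or by adding the virtual edge $xz$ or $z$-to-$A$ attachment, so that $H'=H[A\cup\{x,z\}]+$ a virtual edge. This $H'$ is $2$-connected and has fewer vertices, and we apply induction with $W'=W\cap V(H')$. Degrees of non-$W$ vertices in $A$ are unchanged, while $w'$ can only drop, so the hypothesis $\deg\ge 2+\lceil (w'+1)/2\rceil$ persists. A removable edge of $H'$ avoiding $W'$ and different from the virtual edge lifts to a removable edge of $H$, because $2$-cuts of $H-e$ correspond to $2$-cuts of $H'-e$. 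The base case needs $E(H'-W')\ne\emptyset$. Here the degree bound does the work: a non-$W$ vertex $x$ all of whose neighbours in $A$ lie in $W$ forces $|W\cap A|\ge \deg(x)-2\ge\lceil (w+1)/2\rceil$. Running the same count on the other side (or on $y$) then forces $|W|>w$, a contradiction. This is where the specific threshold $\lceil (w+1)/2\rceil$ enters, since the two sides $A$ and $B$ must share $W$.

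\emph{Main obstacle.} The hard step is the accounting in Step 3, and in particular guaranteeing that the virtual edge is not the only candidate. This means controlling vertices of degree exactly $2+\lceil (w+1)/2\rceil$ whose neighbourhoods are almost entirely in $W$, with $W$ split between the two sides of the separation. I would try first to choose the separation so that $|W\cap (A\cup\{z\})|\le w/2$. One of the two sides always satisfies this. I would then run the minimality and induction argument on that side, so that the degree bound $\ge 2+\lceil (w+1)/2\rceil$ exceeds $2+|W\cap A|$. This forces the required non-$W$ edge inside the side.
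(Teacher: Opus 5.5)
The paper does not prove this statement; it is quoted from Chu, Kim, and Park. So I am judging your argument on its own, and it has a genuine gap at the decisive step.

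\textbf{Step~3 as written.} You take $H'=H[A\cup\{z\}]+xz$ with $W'=W\cap V(H')$, and claim that the degree hypothesis persists and that ``$w'$ can only drop''. This is false:
$x\notin W$ loses its neighbour $y$, so its degree drops when $xz\in E(H)$; and $z$, if $z\notin W$, can have degree as small as $2$ in $H'$.

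You are forced to take $W'=(W\cap A)\cup\{x,z\}$, so $w'=\alpha+2$ with $\alpha=|W\cap A|$. This correction is also what disposes of the virtual edge, since it lies inside $W'$. Combined with your idea of choosing the side with $\alpha\le\lfloor w/2\rfloor$ (count $W\cap A$, not $W\cap(A\cup\{z\})$), it gives a valid induction on $|V(H)|$ when $w\ge 2$:
$x$ has at least $\lceil (w+1)/2\rceil=\lfloor w/2\rfloor+1>\alpha$ neighbours in $A$, hence one, $u$, outside $W$;
$u$ has at least $3+\lfloor w/2\rfloor>|W'|$ neighbours, so $E(H'-W')\neq\emptyset$;
and $\lceil(\alpha+3)/2\rceil\le\lfloor w/2\rfloor+1=\lceil (w+1)/2\rceil$ for every $\alpha\le\lfloor w/2\rfloor$ as soon as $w\ge 2$.

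For $w\in\{0,1\}$, however, $\alpha=0$ and the threshold is $3$, while $H'$ with $|W'|=2$ would need degree $4$ outside $W'$. So the reduction fails exactly in the Halin-type base case, which your proposal never treats.

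\textbf{Step~2 does not rescue this.} The ``usual crossing argument'' is only asserted, and here it controls just one side of the new separation. For $e'=xx'$ with $x'\in A\setminus W$, you can show at best that the side containing $x'$ lies inside $A$. Your count, though, only guarantees that \emph{some} endpoint of each edge of $H-W$ has a non-$W$ neighbour on its own side. The ``both sides'' count at the end of Step~3 gives $|W|>w$ only if both $x$ and $y$ fail, which is not what must be excluded on a minimal side. Moreover, minimising $|A|$ conflicts with choosing the side that contains few vertices of $W$.

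\textbf{How to complete it.}
Drop Step~2 and run the corrected reduction above for $w\ge 2$.
For $w\le 1$, prove a strengthened statement in which $W$ may also be a pair of \emph{adjacent} vertices, with degree $\ge 3$ required outside $W$. An adjacent pair other than $e$ cannot be split between $A$ and $B$, so you can take the side with $W\cap A=\emptyset$. The new pair $\{x,z\}$ is adjacent in $H'$ via the virtual edge, and the same count produces an edge of $H'-W'$.
Finally, state and verify the two $2$-sum facts you rely on: $H'$ is $2$-connected, and a removable edge of $H'$ avoiding $\{x,z\}$ is removable in $H$.
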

We will use Theorem~\ref{thm:key} in the following form: for every
integer $d\ge 4$,
\begin{equation}\label{eq:h2d}
    2+\Bigl\lceil\tfrac{w+1}{2}\Bigr\rceil\le d
    \quad\Longleftrightarrow\quad
    w\le 2d-5 .
\end{equation}

We use the following terminology (cf.~\cite{diestelbook}).
Let $G$ be a graph. For sets $A,B\subseteq V(G)$ and a vertex
$v\in V(G)$, we say that $v$ \emph{separates $A$ and $B$} in $G$ if
every path in $G$ with one end in $A$ and the other end in $B$
contains $v$. If $A=\{a\}$ and $B=\{b\}$, we simply say that $v$
separates $a$ and $b$. A \emph{cut-vertex} of a connected graph $G$ is
a vertex $v$ such that $G-v$ is disconnected. A pair $(A,B)$ of vertex sets is a
\emph{separation of $G$ with cut-vertex $v$} if $A\cup B=V(G)$,
$A\cap B=\{v\}$, $A\neq V(G)\neq B$, and $v$ separates $A$ and $B$
in $G$. 

A \emph{block} of a graph is a maximal connected subgraph without a
cut-vertex. Every block of a connected graph on at least two vertices
is either $2$-connected or a copy of $K_2$, and two distinct blocks
share at most one vertex.
The \emph{block tree} of a connected graph $H$ is the tree whose
vertices are the blocks and the cut-vertices of $H$, where a block
$B$ and a cut-vertex $v$ are adjacent if $v\in V(B)$.
For a connected graph $H$ that is not
$2$-connected, we call a block of $H$ a \emph{leaf block} 
if it contains exactly one cut-vertex of $H$, and an \emph{interior block}
otherwise. Leaf blocks are often called \emph{end blocks} in the literature.
For a block $B$, the vertices of $B$ that are not
cut-vertices of $H$ form the \emph{interior} of $B$.

For disjoint vertex sets $X$ and $Y$, we say $X$ is \emph{complete to}
$Y$ if every vertex of $X$ is adjacent to every vertex of $Y$, 
and  we say that an edge \emph{joins}
$X$ and $Y$ if it has one end in $X$ and the other end in $Y$.
A graph $G'$ is a \emph{spanning supergraph} of $G$ if $V(G')=V(G)$
and $E(G)\subseteq E(G')$. Note that every spanning supergraph of a
$2$-connected graph is $2$-connected.

We will use the following two observations. The first is the
case $k=2$ of~\cite[Observation 4.1]{ckp}.
\begin{observation}\label{obs:add-vertex}
Let $G$ be a $2$-connected graph, and let $G'$ be obtained from $G$
by adding a new vertex $x$ joined to at least two vertices of $G$.
Then $G'$ is $2$-connected.
\end{observation}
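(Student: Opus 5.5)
We prove Observation~\ref{obs:add-vertex} by showing directly that $G'$ has at least three vertices, is connected, and has no cut-vertex. Let $N=N_{G'}(x)\subseteq V(G)$, so $|N|\ge 2$. Since $G$ is $2$-connected, it has at least three vertices, and hence so does $G'$.

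For connectivity, note that $G$ is connected and $x$ has at least one neighbour in $V(G)$. Therefore $G'$ is connected.

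Now fix a vertex $v\in V(G')$ and show that $G'-v$ is connected, in two cases.

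\textbf{Case $v=x$.} Then $G'-v=G$, which is connected.

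\textbf{Case $v\in V(G)$.} Since $G$ is $2$-connected, $G-v$ is connected. Because $|N|\ge 2$, the set $N\setminus\{v\}$ is nonempty. So $x$ has a neighbour in $G-v$, and $G'-v$, which is $G-v$ together with $x$, is connected.

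Since $G'$ has at least three vertices, is connected, and has no cut-vertex, it is $2$-connected.

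The only point requiring care is the second case. There the hypothesis that $x$ has at least two neighbours is exactly what ensures $x$ keeps a neighbour after deleting $v$. With only one neighbour, that neighbour would be a cut-vertex of $G'$. No other step poses any obstacle.
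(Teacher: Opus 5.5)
Your argument is correct: the three checks are exactly the standard verification that $G'$ is $2$-connected. These are that $G'$ has at least three vertices, that $G'$ is connected, and that $G'-v$ stays connected both for $v=x$ and for $v\in V(G)$, using $N\setminus\{v\}\neq\emptyset$. The paper gives no proof of its own here; it cites the statement as the case $k=2$ of Observation~4.1 in Chu, Kim, and Park, so there is nothing further to compare.
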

The second describes how adding one vertex, or one edge, restores
$2$-connectivity when the host graph is not $2$-connected; it is
easily verified, see Figure~\ref{fig:leafblocks}.
\begin{observation}\label{obs:leaf_blocks}
    Let $H$ be a connected graph that is not $2$-connected, let
    $X_1, X_2, \ldots, X_k$ be the leaf blocks of $H$, and let $u_i$
    be an interior vertex of $X_i$ for each $i=1,\ldots,k$. Then, the following hold.
    \begin{enumerate}[(a)]
    \item The graph obtained from $H$ by adding a new vertex $v$ adjacent to
    $u_1,\ldots,u_k$ is $2$-connected. 
    \item If $H$ has exactly two leaf blocks $X_1$ and $X_2$ (so that its block tree is a
    path), then the graph obtained from $H$ by adding the edge $u_1u_2$ is $2$-connected.
    \end{enumerate}
\end{observation}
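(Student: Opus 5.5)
The plan is to prove both parts directly from the definition of $2$-connectivity. We show that the new graph has at least three vertices, is connected, and that deleting any single vertex leaves it connected. Throughout we use the block-tree structure of $H$. Every leaf block $X_i$ contains exactly one cut-vertex $c_i$ of $H$. We may assume each $X_i$ has an interior vertex, and we fix $u_i\neq c_i$. Also, since $H$ is connected and not $2$-connected with at least two vertices (needed for leaf blocks to exist), its block tree has at least two leaves. So $k\ge 2$.

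For (a), let $H'$ be $H$ with the new vertex $v$ added. Then $H'$ is connected, since $H$ is. First delete $v$: what remains is $H$, which is connected. Next delete a vertex $x\in V(H)$. The graph $H-x$ splits into components, and the key claim is that every component of $H-x$ contains some $u_i$ with $u_i\neq x$. Granting this, every component is joined to $v$ in $H'-x$, so $H'-x$ is connected. To prove the claim, consider two cases.
\begin{enumerate}[(i)]
\item If $x$ is not a cut-vertex of $H$, then $H-x$ is connected. It contains at least one of the $u_i$, because $x$ equals at most one of them and $k\ge 2$.
\item If $x$ is a cut-vertex, take a component $C$ of $H-x$. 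Then $C$ together with $x$ corresponds to a subtree of the block tree obtained by cutting at $x$. That subtree has a leaf block other than the blocks at $x$. Such a leaf block is a leaf block of $H$ whose interior lies inside $C$, so its $u_i$ lies in $C$.
\end{enumerate}
The main point needing care is case (ii): we must check that the branch of the block tree at $x$ containing $C$ really ends in a leaf block of $H$, and that this block's interior vertex is not $x$. This holds because $x$ is a cut-vertex and so is never an interior vertex of any block.

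For (b), let $H''=H+u_1u_2$. The block tree is a path $X_1,c_1,\dots,c_m,X_2$. Delete a vertex $x$. If $x$ is not a cut-vertex, then $H-x$ is connected and we are done. If $x=c_j$, then $H-x$ has exactly two components, because every cut-vertex on a path-shaped block tree lies in exactly two blocks. These two components contain the interiors of $X_1$ and $X_2$ respectively, by the same branch argument as in (a)(ii). Since $u_1,u_2\neq x$, the added edge $u_1u_2$ joins the two components, so $H''-x$ is connected.

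Finally, the vertex counts: $H'$ and $H''$ each have at least three vertices, because two blocks and a cut-vertex already force $|V(H)|\ge 3$. Alternatively, (b) follows quickly from (a) by contracting. Still, I would present the direct argument above, with the branch claim as the only step that needs any real justification.
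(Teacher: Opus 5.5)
Your argument is correct and is exactly the routine block-tree verification that the paper leaves to the reader; the paper only says the observation is easily verified and points to Figure~\ref{fig:leafblocks}. There are two harmless slips. First, the existence of leaf blocks, and hence $k\ge 2$, requires $H$ to have a cut-vertex, i.e.\ $|V(H)|\ge 3$, not merely two vertices: for $H=K_2$ there are no leaf blocks and (a) fails. Second, in (a)(ii) the leaf block you find in the branch at $x$ may be the block containing $x$ itself, not one "other than the blocks at $x$"; this does no harm, since its interior still lies in $C$.
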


\begin{figure}[t]
\centering
\begin{tikzpicture}[scale=1.05,
    block/.style={draw, fill=gray!12},
    vtx/.style={circle, fill=black, inner sep=1.4pt},
    newv/.style={circle, draw, fill=white, inner sep=1.7pt},
    newedge/.style={thick, dashed}]

\begin{scope}[xshift=-4.4cm]
    \draw[block] (0,0) ellipse (1.75 and 0.62);
    \coordinate (c1) at (-1.125,0.475);
    \coordinate (c2) at (0,0.62);
    \coordinate (c3) at (1.125,0.475);
    \draw[block, rotate around={130:(-1.671,1.126)}] (-1.671,1.126) ellipse (0.85 and 0.52);
    \draw[block] (0,1.20) ellipse (0.78 and 0.55);
    \draw[block, rotate around={50:(1.671,1.126)}]   (1.671,1.126) ellipse (0.85 and 0.52);
    \node[vtx] at (c1) {};
    \node[vtx] at (c2) {};
    \node[vtx] at (c3) {};
    \node[vtx, label={[label distance=-1pt]270:{\small $u_1$}}] (u1) at (-1.96,1.47) {};
    \node[vtx, label={[label distance=-1pt]270:{\small $u_2$}}] (u2) at (0,1.42) {};
    \node[vtx, label={[label distance=-1pt]270:{\small $u_3$}}]  (u3) at (1.96,1.47) {};
    \node at (-2.6,1.35) {\small $X_1$};
    \node at (0.9,1.68)  {\small $X_2$};
    \node at (2.6,1.35)  {\small $X_3$};
    \node[newv, label={[label distance=-2pt]90:{\small $v$}}] (v) at (0,2.95) {};
    \draw[newedge] (v) -- (u1);
    \draw[newedge] (v) -- (u2);
    \draw[newedge] (v) -- (u3);
    \node at (0,-1.15) {(a)};
\end{scope}

\begin{scope}[xshift=4.2cm, yshift=0.5cm]
    \draw[block] (-1.70,0.6) ellipse (0.95 and 0.62);
    \draw[block] (0,0.6)     ellipse (0.75 and 0.48);
    \draw[block] (1.70,0.6)  ellipse (0.95 and 0.62);
    \node[vtx] at (-0.75,0.6) {};
    \node[vtx] at (0.75,0.6)  {};
    \node[vtx, label={[label distance=-1pt]270:{\small $u_1$}}] (w1) at (-1.95,0.7) {};
    \node[vtx, label={[label distance=-1pt]270:{\small $u_2$}}]  (w2) at (1.95,0.7) {};
    \node at (-1.70,-0.3) {\small $X_1$};
    \node at (1.70,-0.3)  {\small $X_2$};
    \draw[newedge] (w1) to[bend left=50] (w2);
    \node at (0,-1.65) {(b)};
\end{scope}
\end{tikzpicture}
\caption{Restoring $2$-connectivity: (a) a new vertex $v$ adjacent to
an interior vertex of each leaf block of $H$; (b) when the block tree
of $H$ is a path, a new edge $u_1u_2$ joining the interiors of the two
leaf blocks.}
\label{fig:leafblocks}
\end{figure}

We conclude this section with three auxiliary lemmas. The first is an
elementary bound on the matching number.
\begin{lemma}\label{lem:min_matching}
    Every graph $G$ on $n$ vertices contains a matching of size
    $\min\{\delta(G),\,\lfloor n/2\rfloor\}$.
\end{lemma}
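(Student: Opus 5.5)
We take a maximum matching $M$ of $G$ and show $|M|\ge m:=\min\{\delta(G),\lfloor n/2\rfloor\}$ by the standard augmenting-style exchange argument; $m$ is an integer target, and since a submatching of $M$ is a matching, it suffices to find one matching of size at least $m$. Suppose for contradiction that $|M|=s<m$. Then $s<\lfloor n/2\rfloor$, so at least two vertices $x,y$ are left unmatched by $M$. By maximality of $M$, the set $U$ of unmatched vertices is independent, so every neighbor of $x$ and of $y$ is covered by $M$. Moreover $s<\delta(G)\le \deg(x)$, so $x$ has more than $s$ neighbors.

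The key step is a counting/pigeonhole argument on the edges of $M$. Both $x$ and $y$ have all of their at least $\delta(G)\ge s+1$ neighbors among the $2s$ vertices covered by $M$. Then $\deg(x)+\deg(y)\ge 2s+2>2s$, so by pigeonhole some edge $ab\in M$ receives at least three edges from $\{x,y\}$. Hence one of $x,y$ is adjacent to both $a$ and $b$, and the other is adjacent to at least one of them; say, after relabeling, $xa$ and $yb$ are edges with $x\ne y$. Then $(M\setminus\{ab\})\cup\{xa,yb\}$ is a matching of size $s+1$, since $x,y$ are unmatched and distinct, contradicting the maximality of $M$.

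The only point requiring a little care is the pigeonhole step: we must ensure the three edges from $\{x,y\}$ to $\{a,b\}$ really yield two disjoint edges $xa',yb'$ with $\{a',b'\}=\{a,b\}$. This holds: if $x$ sees both $a$ and $b$ and $y$ sees $a$, use $xb,ya$; symmetrically otherwise. Thus this is not a real obstacle. Also, when $\delta(G)=0$ or $n\le1$ the statement is trivial, since the empty matching suffices, and in the main argument we only need $n\ge 2s+2$, which follows from $s<\lfloor n/2\rfloor$. This completes the plan.
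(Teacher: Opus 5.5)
Your proposal is correct and follows essentially the same argument as the paper. Both take a maximum matching, use that two unmatched vertices have all their neighbors in $V(M)$, and note that their degree sum exceeds $2|M|$, which forces an edge $ab\in M$ with $xa,yb\in E(G)$. Swapping $ab$ for $xa,yb$ then gives a larger matching; your pigeonhole remark (some $M$-edge receives at least three edges from $\{x,y\}$) just spells out the step the paper states in one line.
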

\begin{proof}
    Let $M$ be a maximum matching of $G$, and let $U=V(G)\setminus V(M)$.
    If $|U|\le 1$, then $|M|=\lfloor n/2\rfloor$, so we may assume
    $|U|\ge 2$. By the maximality of $M$, the set $U$ is independent.

    Suppose $|M|<\delta(G)$, and let $u,u'\in U$ be distinct. Since all
    neighbors of $u$ and $u'$ lie in $V(M)$ and
    $\deg_G(u)+\deg_G(u')\ge 2\delta(G)>2|M|$, there is an edge
    $xy\in M$ with $ux, yu'\in E(G)$.
    Then, $(M\setminus \{xy\})\cup \{ux,yu'\}$
    is a matching larger than $M$, a contradiction. 
    Therefore, $|M|\ge \delta(G)$.
\end{proof}
The next lemma provides a removable edge in the dense configurations
arising in Section~\ref{sec:proof}.

\begin{lemma}\label{lem:bipartite}
Let $G$ be a $2$-connected graph, and let $\{X,Y\}$ be a partition of
$V(G)$ such that $X$ is complete to $Y$ and $|Y|\ge 3$. Then every
edge $xy$ of $G$ with $x\in X$, $y\in Y$, and $\deg_G(y)\ge 3$ is
removable.
\end{lemma}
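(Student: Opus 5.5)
We must show that $G-xy$ is $2$-connected. Write $H=G-xy$. The plan is to exhibit, for every vertex $v$, that $H-v$ is connected. Equivalently, we show $H$ is connected and has no cut-vertex. The key structural fact is that $H$ still contains the complete bipartite graph between $X$ and $Y$ minus the single edge $xy$. Since $|Y|\ge 3$, this graph is very robust; the only vertices it might leave poorly attached are $x$ and $y$ themselves.

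First, observe $|X|\ge 2$. If $X=\{x\}$, then $x$ would be a cut-vertex of $G$: $Y$ has at least $3$ vertices and they are pairwise connected only through $x$ or through edges inside $Y$. That does not immediately contradict $2$-connectivity, so instead I would handle this case directly.

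If $|X|=1$, then $\deg_G(y)\ge 3$ forces $y$ to have at least two neighbours in $Y$. The graph $G[Y]$ together with $x$ is $2$-connected. Removing $xy$ leaves $x$ adjacent to all of $Y\setminus\{y\}$, which has at least two vertices. Also $G-x=G[Y]$ is connected, because $G$ is $2$-connected. So I would argue $H$ is $2$-connected by checking each $H-v$:
\begin{itemize}
\item For $v=x$, $H-x=G[Y]$ is connected.
\item For $v\in Y$, $H-v$ contains $x$ adjacent to all of $Y\setminus\{y,v\}$, which is nonempty. Any vertex of $Y\setminus\{v\}$ other than $y$ is adjacent to $x$, and $y$ has a neighbour in $Y\setminus\{v\}$ because it has at least two $Y$-neighbours. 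If that neighbour is not $y$ itself, it links $y$ to $x$ through an edge inside $Y$.
\end{itemize}

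For $|X|\ge 2$, consider the graph $K$ on $V(G)$ consisting of all edges between $X$ and $Y$ except $xy$; this is a subgraph of $H$. For a vertex $v$, the argument runs as follows:
\begin{itemize}
\item Vertices of $X\setminus\{v\}$ other than $x$ are joined to all of $Y\setminus\{v\}$, which has at least $2$ vertices.
\item Vertices of $Y\setminus\{v\}$ other than $y$ are joined to all of $X\setminus\{v\}$, which is nonempty.
\item The vertex $x$ is joined to $Y\setminus\{y,v\}$, which is nonempty since $|Y|\ge 3$.
\item So everything except possibly $y$ is connected in $H-v$. This uses that some $x'\in X\setminus\{x,v\}$ or some $y'\in Y\setminus\{y,v\}$ serves as a hub, which holds because $|Y\setminus\{y,v\}|\ge 1$ and each such $y'$ is adjacent to all of $X\setminus\{v\}$.
\end{itemize}

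Finally, $y$ must be attached in $H-v$ (when $v\neq y$). The vertex $y$ is adjacent to $X\setminus\{x,v\}$ and to its neighbours in $Y$. Since $\deg_H(y)=\deg_G(y)-1\ge 2$, $y$ has at least two neighbours in $H$, so after deleting $v$ at least one remains, and it lies in the connected rest. Hence $H-v$ is connected for every $v$. Also $|V(H)|\ge 4\ge 3$, so $H$ is $2$-connected and $xy$ is removable.

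The main obstacle is the bookkeeping for the single vertex $y$ and the degenerate case $|X|=1$. The degree hypothesis $\deg_G(y)\ge 3$ is exactly what guarantees $y$ keeps two neighbours after the edge deletion, and the same degree count handles $|X|=1$ uniformly.
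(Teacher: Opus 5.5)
Your proof is correct, but it takes a different route from the paper.

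\textbf{The paper's route.} The paper argues by contradiction. A cut-vertex $c$ of $H=G-xy$ must separate $x$ from $y$; this is where $2$-connectivity of $G$ enters. Take a separation $(A,B)$ of $H$ with cut-vertex $c$, $x\in A$ and $y\in B$. Either $B\setminus\{c\}$ contains a vertex of $X$, which forces $Y\setminus\{y\}\subseteq A\cap B=\{c\}$ and contradicts $|Y|\ge 3$. Or $B\setminus\{c\}\subseteq Y$, in which case every vertex of $B\setminus\{c,y\}$ is a neighbour of $x$ in $H$ and hence lies in $A$. Then $B=\{c,y\}$ and $\deg_H(y)\le 1$. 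This treats all sizes of $X$ at once.

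\textbf{Your route.} You verify directly that $H-v$ is connected for every $v$. When $|X|\ge 2$, the set $(X\setminus\{v\})\cup(Y\setminus\{y,v\})$ spans a complete bipartite subgraph of $H$ with both sides nonempty, and $y$ attaches to it because $\deg_H(y)\ge 2$. The case $|X|=1$ needs separate treatment, since this hub collapses when $v=x$; there you use that $G-x$ is connected.

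\textbf{Comparison.} Your version is fully constructive. It also makes visible that $2$-connectivity of $G$ is only needed when $|X|=1$: for $|X|\ge 2$, the bipartite skeleton together with $\deg_G(y)\ge 3$ already suffices. The paper's version is shorter and avoids the case split.

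\textbf{Two cosmetic fixes.} First, delete the opening claim that $|X|\ge 2$ and that otherwise $x$ would be a cut-vertex of $G$. It is false (take a wheel with hub $x$ and rim $Y$), the surrounding sentence is self-contradictory, and you never use it. Second, the phrase ``if that neighbour is not $y$ itself'' is vacuous, since any neighbour of $y$ in $Y\setminus\{v\}$ automatically lies in $Y\setminus\{y,v\}$ and is therefore adjacent to $x$.
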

\begin{proof}
Let $H=G-xy$, and suppose to the contrary that $H$ is not
$2$-connected. Then $H$ is connected, and it has a cut-vertex $c$,
which necessarily satisfies $c\notin\{x,y\}$ and separates $x$ and $y$
in $H$. Let $(A,B)$ be a separation of $H$ with cut-vertex $c$ such
that $x\in A$ and $y\in B$. Note that every neighbor of $x$ in $H$
lies in $A$, and every neighbor of $y$ in $H$ lies in $B$.

First suppose that $(B\setminus\{c\})\cap X\neq\emptyset$, say
$x'\in(B\setminus\{c\})\cap X$. In $H$, the vertex $x'$ is adjacent to
all vertices of $Y$, which therefore lie in $B$, and $x$ is adjacent
to all vertices of $Y\setminus\{y\}$, which therefore lie in $A$.
Hence $Y\setminus\{y\}\subseteq A\cap B=\{c\}$, so $|Y|\le 2$, a
contradiction.

Therefore $(B\setminus\{c\})\cap X=\emptyset$. Every vertex of
$B\setminus\{c,y\}$ is nonadjacent to $x$ in $G$, since every
neighbor of $x$ other than $y$ lies in $A$. As $X$ is complete to
$Y$ and $B\setminus\{c\}\subseteq Y$, this gives
$B\setminus\{c,y\}=\emptyset$. Hence every neighbor of $y$ in $H$ lies
in $\{c\}$, so $\deg_H(y)\le 1$, contradicting
$\deg_H(y)\ge\deg_G(y)-1\ge 2$.
\end{proof}

The last lemma concerns a maximum removable matching avoiding a
prescribed vertex set.
\begin{lemma}\label{lem:block}
    Let $d\ge 4$ be an integer, let $G$ be a $2$-connected graph, and let
    $W\subsetneq V(G)$ with $|W|=w$ be such that every vertex of
    $V(G)\setminus W$ has degree at least $d$ in $G$.
    Let $M$ be a maximum removable matching of $G$ with
    $M\subseteq E(G-W)$, let $a=|M|$, and let
    $X=V(G)\setminus(W\cup V(M))$. Then the following hold.
    \begin{enumerate}[(i)]
        \item If $2a+w\le 2d-5$, then $X$ is an independent set.
        \item $2a+w\ge d$.
    \end{enumerate}
\end{lemma}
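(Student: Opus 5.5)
The plan is to prove (i) by applying Theorem~\ref{thm:key} to $H=G-M$, and to derive (ii) from (i) by a degree count.

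For (i), suppose that $2a+w\le 2d-5$ but $X$ contains an edge $xy$. Set $H=G-M$, which is $2$-connected because $M$ is removable. Let $W'=W\cup V(M)$, so that $|W'|=w+2a\le 2d-5$.

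The key observation is that $M$ only touches vertices of $V(M)\subseteq W'$. Hence every vertex $v\in V(H)\setminus W'=X$ satisfies $\deg_H(v)=\deg_G(v)\ge d$. By \eqref{eq:h2d}, this gives $\deg_H(v)\ge 2+\lceil (|W'|+1)/2\rceil$. Moreover $E(H-W')\ne\emptyset$, since $xy\in E(H-W')$.

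Theorem~\ref{thm:key} then provides a removable edge $e$ of $H$ with both ends in $X$. Because $e$ has both ends in $X$, it avoids both $W$ and $V(M)$. Therefore $M\cup\{e\}$ is a matching of $G-W$. It is also removable, because $G-(M\cup\{e\})=H-e$ is $2$-connected. This matching is larger than $M$, contradicting the maximality of $M$. So $X$ is independent.

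For (ii), suppose to the contrary that $2a+w\le d-1$. Since $d\ge 4$, we have $d-1\le 2d-5$, so (i) applies and $X$ is independent. Next I check that $X\neq\emptyset$. Because $W\subsetneq V(G)$, there is some vertex outside $W$, and its degree is at least $d$. Hence $|V(G)|\ge d+1$, and so $|X|=|V(G)|-w-2a\ge d+1-(d-1)=2$.

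Now take any $x\in X$. Since $X$ is independent, all neighbors of $x$ lie in $W\cup V(M)$. This gives $\deg_G(x)\le w+2a\le d-1$, contradicting $\deg_G(x)\ge d$.

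The only point requiring care is in (i): checking that the degree hypothesis of Theorem~\ref{thm:key} transfers from $G$ to $H=G-M$ on $V(H)\setminus W'$. This holds precisely because $V(M)$ has been placed into $W'$, so no vertex outside $W'$ loses an edge when $M$ is deleted. Everything else is routine.
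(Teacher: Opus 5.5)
Your proof is correct and follows the paper's argument essentially step for step: (i) applies Theorem~\ref{thm:key} to $G-M$ with the enlarged set $W\cup V(M)$, and (ii) reduces to (i) via $d-1\le 2d-5$ together with a degree count. The only cosmetic difference is that you show $X\neq\emptyset$ up front from $|V(G)|\ge d+1$, using any vertex outside $W$. The paper instead first derives $X=\emptyset$ and then reaches the contradiction through a vertex of $V(M)$.
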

\begin{proof}
    Note that the empty matching is removable, so a maximum removable
    matching $M$ with $M\subseteq E(G-W)$ exists, and $H=G-M$ is
    $2$-connected. Every vertex $x\in V(G)\setminus(W\cup V(M))$ satisfies
    $\deg_H(x)=\deg_G(x)\ge d$.

    (i) Suppose $2a+w\le 2d-5$ and $X$ contains an edge, i.e.,
    $E\bigl(H-(W\cup V(M))\bigr)\neq\emptyset$. Since
    $|W\cup V(M)|=w+2a\le 2d-5$, every vertex of
    $V(G)\setminus(W\cup V(M))$ has degree at least
    $d\ge 2+\bigl\lceil\tfrac{w+2a+1}{2}\bigr\rceil$ in $H$
    by~\eqref{eq:h2d}, so Theorem~\ref{thm:key} applied to $H$ with
    the set $W\cup V(M)$ yields a removable edge $e$ of $H$ with both
    ends in $X$. Then $M\cup\{e\}$ is a removable matching of $G$ with
    $M\cup\{e\}\subseteq E(G-W)$, contradicting the maximality of $M$.

    (ii) Suppose $2a+w\le d-1$. Since $d\ge 4$, we have
    $d-1\le 2d-5$, so $X$ is independent by (i). If $X\neq\emptyset$, then
    any $x\in X$ satisfies $N_G(x)\subseteq W\cup V(M)$, so
    $d\le \deg_G(x)\le w+2a\le d-1$, a contradiction. Hence $X=\emptyset$,
    that is, $V(G)=W\cup V(M)$. Since $W\subsetneq V(G)$, we have
    $V(M)\neq\emptyset$. Any $u\in V(M)$ satisfies $\deg_G(u)\ge d$, so
    $w+2a=|V(G)|\ge d+1$, again a contradiction.
\end{proof}

%=======================================================================
\section{Removable matchings in small graphs}\label{sec:small}
%=======================================================================

Theorem~\ref{thm:main} concerns graphs on at least $2d$ vertices. In
this section, we settle the complementary range $|V(G)|\le 2d-1$. Precisely, we show that a
non-removable matching can exist only when $|V(G)|=2d-1$, and only
with one specific structure (Theorem~\ref{thm:small}). As a
consequence, every such graph has a removable matching of the largest
possible size (Corollary~\ref{cor:small}).

\begin{theorem}\label{thm:small}
    Let $d\ge 3$ be an integer, and let $G$ be a $2$-connected graph
    with $\delta(G)\ge d$ and $|V(G)|\le 2d-1$. If there is a
    non-removable matching $M$ of $G$, then $|V(G)|=2d-1$ and
    $|M|=d-1$. Furthermore, $H=G-M$ has a separation $(A,B)$ with
    cut-vertex $v$ such that $|A|=|B|=d$, both $H[A]$ and $H[B]$ are
    complete, and every edge of $M$ joins $A\setminus B$ and
    $B\setminus A$.
\end{theorem}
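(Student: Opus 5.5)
Let $n=|V(G)|\le 2d-1$ and let $H=G-M$. Since $M$ is non-removable and $G-M$ is a spanning subgraph of $G$, the graph $H$ is not $2$-connected. Every vertex loses at most one edge, so $\delta(H)\ge d-1$. I would first show that $H$ is connected. If $H$ were disconnected, every component would have at least $\delta(H)+1\ge d$ vertices. Hence there would be exactly two components $C_1,C_2$, and $d\le |C_i|\le d-1+\ldots$; more precisely $|C_1|+|C_2|\le 2d-1$ forces one component, say $C_1$, to have exactly $d$ vertices and the other at most $d-1$ vertices, which is impossible. So $H$ is connected with a cut-vertex $v$, and we may take a separation $(A,B)$ of $H$ with cut-vertex $v$.

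The core counting step is as follows. Pick $a\in A\setminus\{v\}$ and $b\in B\setminus\{v\}$. Then $N_H(a)\subseteq A\setminus\{a\}$, so $|A|\ge d$ (since $\deg_H(a)\ge d-1$), and similarly $|B|\ge d$. But $|A|+|B|=n+1\le 2d$. Hence $|A|=|B|=d$ and $n=2d-1$. The equality $|A|=d$ means each $a\in A\setminus\{v\}$ satisfies $\deg_H(a)=d-1$ with $N_H(a)=A\setminus\{a\}$. Combined with $\deg_G(a)\ge d$, this shows that $a$ is covered by an edge of $M$ and that this edge leaves $A$. The same holds on the $B$ side.

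From this I would read off the structure. Every vertex of $A\setminus\{v\}$ is adjacent in $H$ to all other vertices of $A$, so $H[A]$ is complete, and likewise $H[B]$ is complete. Each of the $d-1$ vertices of $A\setminus\{v\}$ is matched by $M$ to a vertex outside $A$, namely to $B\setminus A$. The edges of $M$ are disjoint, and both sides have exactly $d-1$ non-cut vertices. Therefore $M$ contains $d-1$ edges joining $A\setminus B$ and $B\setminus A$, these edges cover every vertex except $v$, and so $M$ has no further edge; since a vertex-$v$ edge would need a partner already used. This gives $|M|=d-1$, every edge of $M$ joins $A\setminus B$ and $B\setminus A$, and $H[A]$, $H[B]$ are complete.

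The only delicate point is the disconnected case in the first paragraph, where the sizes must be handled cleanly: two components each of size at least $d$ would give $n\ge 2d$, a contradiction. Everything else is a direct consequence of the tight degree count, so I expect no genuine obstacle beyond bookkeeping. The assumption $d\ge 3$ is used only to guarantee nondegeneracy.
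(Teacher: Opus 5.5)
Your proof is correct and follows essentially the same route as the paper: $H$ is connected by counting component sizes, the tight count $|A|,|B|\ge d$ with $|A|+|B|=n+1\le 2d$ gives $N_H(a)=A\setminus\{a\}$ and hence completeness, and the surplus $\deg_G(a)\ge d>\deg_H(a)$ forces every non-cut vertex to be matched across to the other side. The only blemish is the garbled sentence in the disconnected case (nothing forces a component of size exactly $d$), but the clean version you give at the end, that two components of size at least $d$ would give $n\ge 2d$, is exactly the paper's argument.
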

\begin{proof}
    Let $H=G-M$. Since $M$ is a matching, every vertex of $H$ has degree
    at least $d-1$, so every component of $H$ has at least $d$ vertices.
    If $H$ is disconnected, then $|V(G)|\ge 2d$, a contradiction. Hence
    $H$ is connected. Since $M$ is non-removable, $H$ is not
    $2$-connected, so it has a separation $(A,B)$ with cut-vertex $v$.
    Every vertex of $A\setminus B$ has degree at least $d-1$ in $H$ and
    all its neighbors lie in $A$, so $|A|\ge d$. Similarly, $|B|\ge d$.
    Therefore
    \[
        2d-1 \;\ge\; |V(G)| \;=\; |A|+|B|-1 \;\ge\; 2d-1,
    \]
    and equality holds, that is, $|V(G)|=2d-1$ and
    $|A|=|B|=d$. Now every $a\in A\setminus\{v\}$ has degree at least
    $d-1$ in $H$ and all its neighbors lie in $A\setminus\{a\}$, a set of
    size $d-1$. Hence $H[A]\cong K_d$ and, symmetrically, $H[B]\cong K_d$.

    It remains to determine $M$. Since $H[A]$ and $H[B]$ are complete,
    no edge of $M$ has both ends in $A$ or both ends in $B$. Hence every
    edge of $M$ joins $A\setminus B$ and $B\setminus A$. In particular,
    $v$ is incident with no edge of $M$. Moreover, every vertex
    $u\in V(G)\setminus\{v\}$ has degree exactly $d-1$ in $H$ and degree
    at least $d$ in $G$, so $M$ contains an edge incident with $u$.
    Hence $V(M)=V(G)\setminus\{v\}$ and $|M|=\frac{|V(M)|}{2}=d-1$.
\end{proof}

Conversely, let $G$ be the graph obtained from two copies $X$ and $Y$
of $K_d$ sharing a single vertex by adding a perfect matching between
$V(X)\setminus V(Y)$ and $V(Y)\setminus V(X)$. Then $G$ is
$2$-connected with minimum degree $d$, and the added matching is
non-removable.

\begin{corollary}\label{cor:small}
    Let $d\ge 4$ be an integer, and let $G$ be a $2$-connected graph
    with $\delta(G)\ge d$ and $|V(G)|\le 2d-1$. Then $G$ has a
    removable matching of size $\lfloor |V(G)|/2 \rfloor$.
\end{corollary}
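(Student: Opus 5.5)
Let $n=|V(G)|$ and $m=\lfloor n/2\rfloor$. The plan is to take any matching of size $m$ and use Theorem~\ref{thm:small} to see that it is automatically removable, except in one configuration, which we handle by hand. Since $\delta(G)\ge d$ and $n\le 2d-1$, we have $\min\{\delta(G),\lfloor n/2\rfloor\}=m$. Lemma~\ref{lem:min_matching} therefore gives a matching $M$ of $G$ with $|M|=m$.

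\textbf{Case $n\le 2d-2$.} Theorem~\ref{thm:small} says that $G$ has no non-removable matching at all. Hence $M$ is removable and we are done.

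\textbf{Case $n=2d-1$.} Here $m=d-1$. If $M$ is removable, we are done. Otherwise Theorem~\ref{thm:small} describes $M$ completely.
\begin{itemize}
\item $H=G-M$ has a separation $(A,B)$ with cut-vertex $v$ and $|A|=|B|=d$.
\item $H[A]$ and $H[B]$ are complete.
\item $M$ is a perfect matching between $A\setminus\{v\}$ and $B\setminus\{v\}$.
\end{itemize}
The task is to modify $M$ into a different $(d-1)$-matching $M'$ of $G$ that fails this description. Once that is done, Theorem~\ref{thm:small} applied to $M'$ shows $M'$ is removable.

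To build $M'$, pick an edge $ab\in M$ with $a\in A\setminus\{v\}$ and $b\in B\setminus\{v\}$. Since $|A\setminus\{v\}|=d-1\ge 3$, we may also pick $a'\in A\setminus\{v,a\}$, and $va'$ is an edge of $H[A]$. Let $b'$ be the partner of $a'$ in $M$. Define
\[
M'=(M\setminus\{a'b'\})\cup\{va'\}.
\]
This is a matching of size $d-1$: the new edge $va'$ is disjoint from the other edges, because $v$ was unmatched by $M$ and $a'$ was only covered by $a'b'$.

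Suppose $M'$ were non-removable. Then $M'$ covers every vertex except one, namely $b'$. By Theorem~\ref{thm:small} applied to $M'$, the graph $G-M'$ has a separation $(A',B')$ with cut-vertex $b'$ such that $G-M'-b'$ splits into two cliques on $d-1$ vertices each. Every edge of $M'$ must join the two sides of this split.

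Now consider $G-M'$. It contains all edges of $H[A]$ except $va'$, and all edges of $H[B]$.
\begin{itemize}
\item The vertices of $A\setminus\{v,a'\}$ are mutually adjacent and are adjacent to both $v$ and $a'$.
\item Since $d-2\ge 2$, the set $A$ induces a connected subgraph of $G-M'-b'$.
\item The vertex $v\in A$ is adjacent to every vertex of $B\setminus\{b'\}$, since $H[B]$ is complete and these edges are not in $M'$.
\end{itemize}
So all of $A\cup (B\setminus\{b'\})$ lies in one component of $G-M'-b'$. In fact this set is all of $V(G)\setminus\{b'\}$, so $G-M'-b'$ is connected. This contradicts the fact that $b'$ is a cut-vertex of $G-M'$. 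Hence $M'$ is removable.

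The step needing care is this final connectivity check: confirming that the vertex set of $G-M'-b'$ is exactly $A\cup(B\setminus\{b'\})$, and that $d\ge 4$ is what guarantees $A\setminus\{v,a'\}$ is nonempty enough to keep $A$ connected after removing $va'$.
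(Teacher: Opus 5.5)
Your proof is correct, but it takes a different route from the paper in the exceptional case $n=2d-1$.

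The paper discards $M$ entirely there. It takes a matching $M_1$ of size $\lfloor d/2\rfloor$ in the clique $H[A]$ and a matching $M_2$ of size $\lfloor (d-1)/2\rfloor$ in $H[B]-v$. It then checks directly that a complete graph minus such a matching is still $2$-connected, so $H-(M_1\cup M_2)$ has exactly two blocks. Finally it restores $2$-connectivity with any edge of $M$ via Observation~\ref{obs:leaf_blocks}(b).

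You instead make a single swap $a'b'\mapsto va'$ and invoke Theorem~\ref{thm:small} a second time, now on $M'$. Since every edge of $M'$ joins $A'\setminus B'$ and $B'\setminus A'$, the cut-vertex of $G-M'$ lies in no edge of $M'$, so it must be the unique $M'$-uncovered vertex $b'$. This deduction deserves one explicit sentence in your write-up. With it, $2$-connectivity reduces to the much weaker check that $G-M'-b'$ is connected, which you verify correctly.

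What each route buys:
\begin{itemize}
\item Your route is shorter. It only needs $A\setminus\{v,a'\}\neq\emptyset$, so it already works for $d\ge 3$; the auxiliary edge $ab$ plays no role. The paper's argument uses $d-1\ge 3$ to make $K_{d-1}$ minus a matching connected.
\item The paper's route gives a self-contained certificate of $2$-connectivity without leaning on the full strength of Theorem~\ref{thm:small} again. It also rehearses the ``two leaf blocks plus one reconnecting edge of $M$'' mechanism that is later packaged as Lemma~\ref{lem:assembly}.
\end{itemize}
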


\begin{proof}
    Let $n=|V(G)|$. Since $n\le 2d-1$, we have
    $\lfloor n/2\rfloor\le d-1<\delta(G)$, so $G$ has a matching $M$ of
    size $\lfloor n/2 \rfloor$ by Lemma~\ref{lem:min_matching}. If $M$
    is removable, then we are done, so suppose that $M$ is
    non-removable. By Theorem~\ref{thm:small}, $n=2d-1$ and
    $H=G-M$ has a separation $(A,B)$ with cut-vertex $v$ such that
    $|A|=|B|=d$, both $H[A]$ and $H[B]$ are complete, and every edge
    of $M$ joins $A\setminus B$ and $B\setminus A$.

    Let $M_1$ be a matching of $H[A]$ of size
    $\lfloor \frac{d}{2} \rfloor$, and let $M_2$ be a matching of
    $H[B]-v$ of size $\lfloor \frac{d-1}{2} \rfloor$. Then
    $M'=M_1\cup M_2$ is a matching of size $d-1=\lfloor n/2 \rfloor$.
    We claim that $X=H[A]-M_1$ is $2$-connected. For every
    $x\in V(X)$, the graph $X-x$ is a complete graph on $d-1\ge 3$
    vertices minus a matching, in which any two nonadjacent vertices
    have a common neighbor; hence $X-x$ is connected, and the claim
    follows.  Similarly, $Y=H[B]-M_2$ is
    $2$-connected.
    Let $H'=H-M'$. Then $H'$ has exactly two blocks $X$ and $Y$,
    and each edge $e$ of $M$ joins their interiors $A\setminus B$ and
    $B\setminus A$. Hence $H'+e$ is $2$-connected by
    Observation~\ref{obs:leaf_blocks}(b), and $G-M'$ is $2$-connected
    because it is a spanning supergraph of $H'+e$. Therefore, $M'$ is
    a removable matching of size $\lfloor n/2 \rfloor$.
\end{proof}

%=======================================================================
\section{Minimal non-removable matchings}\label{sec:minimal}
%=======================================================================

Throughout this section, let $d\ge 3$ be an integer and let $G$ be a
$2$-connected graph with $\delta(G)\ge d$. Suppose that $G$ contains a
minimal non-removable matching $M$, and let $H=G-M$. Note that $M$ is
nonempty, since the empty matching is removable. The following lemma
describes the structure of $H$.

\begin{lemma}\label{lem:structure}
    The graph $H$ has exactly two leaf blocks $X_1$ and $X_2$, that
    is, the block tree of $H$ is a path, and every edge of $M$ has one
    end in the interior of $X_1$ and the other end in the interior of
    $X_2$. Furthermore, $X_1$ and $X_2$ are $2$-connected.
\end{lemma}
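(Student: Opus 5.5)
We first show that $H$ is connected. For any $e=uv\in M$, the graph $H_e=G-(M\setminus\{e\})=H+e$ is $2$-connected by minimality. If $H$ were disconnected, then $H+e$ would have a bridge $e$ whose removal disconnects it. For $|V(G)|\ge 3$ a $2$-connected graph has no bridge, so $H$ is connected. Since $M$ is non-removable, $H$ is not $2$-connected, so it has at least two leaf blocks.

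The key step is to show that every edge $e=uv\in M$ has its ends in the interiors of two distinct leaf blocks, and that these are all the leaf blocks. Since $H+e$ is $2$-connected, every cut-vertex $c$ of $H$ must fail to be a cut-vertex of $H+e$. Hence $u$ and $v$ lie in different components of $H-c$ for every cut-vertex $c$; in particular $u,v\neq c$. Now take a leaf block $X$ with cut-vertex $c_X$. If neither $u$ nor $v$ lies in $X\setminus\{c_X\}$, then $c_X$ still separates the interior of $X$ from the rest in $H+e$, a contradiction. So every leaf block has $u$ or $v$ in its interior. Leaf-block interiors are disjoint, and the two ends of $e$ can cover at most two of them. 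Hence there are exactly two leaf blocks $X_1,X_2$, with $u$ interior to one and $v$ interior to the other. This holds for each $e\in M$, since we used only $H+e$. Because the block tree has exactly two leaves, it is a path.

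It remains to show that $X_1$ and $X_2$ are $2$-connected rather than copies of $K_2$. Suppose $X_1\cong K_2$, with interior vertex $u$ and cut-vertex $c$. Then $\deg_H(u)=1$. Since $u$ is interior to $X_1$, every edge of $M$ has an end in $\{u\}$, so $|M|=1$ (edges of a matching are disjoint) and $\deg_G(u)=2<d$ because $d\ge3$, a contradiction. The same argument applies to $X_2$.

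The main obstacle is the middle step, namely carefully justifying that a cut-vertex $c$ of $H$ remains a cut-vertex of $H+e$ unless $e$ joins distinct components of $H-c$. We also need to check the leaf-block covering argument when a leaf block's interior contains $u$ but the opposite leaf block receives nothing, which the counting above handles.
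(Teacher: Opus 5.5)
Your proof is correct and follows essentially the same route as the paper. It derives the connectivity of $H$ from the $2$-connectivity of $H+e$. It uses the observation that a leaf block whose interior misses both ends of $e$ would keep its cut-vertex in $H+e$, which forces exactly two leaf blocks with $e$ joining their interiors. It then uses a degree argument to rule out $X_i\cong K_2$. The only difference is cosmetic: in the last step you pass through $|M|=1$. That detour is unnecessary, since $M$ being a matching already gives $\deg_G(u)\le \deg_H(u)+1=2<d$, which matches the paper's use of $\deg_H(v)\ge d-1\ge 2$.
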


\begin{proof}
    For every edge $f\in M$, the matching $M\setminus\{f\}$ is
    removable by the minimality of $M$, that is, $H+f$ is
    $2$-connected. In particular, $H$ is connected, as it is obtained
    from the $2$-connected graph $H+f$ by deleting a single edge,
    while $H$ is not $2$-connected since $M$ is non-removable. Hence
    $H$ has at least two leaf blocks.

     Let $e\in M$, and let $B$ be a leaf block of $H$ with
    cut-vertex $c$. If no end of $e$ lies in $V(B)\setminus\{c\}$, then
    $c$ separates $V(B)\setminus\{c\}$ and $V(H)\setminus V(B)$ in
    $H+e$ as well, contradicting the $2$-connectivity of $H+e$. Hence
    the interior of every leaf block contains an end of $e$. Since  the
    interiors of distinct leaf blocks are disjoint and $e$ has only two
    ends, $H$ has exactly two leaf blocks, say $X_1$ and $X_2$, and $e$ joins their
    interiors. As the block tree of a connected graph is a tree, a
    block tree with exactly two leaves is a path.

    Furthermore, for $i\in\{1,2\}$, the interior of $X_i$ contains a
    vertex $v$ of $V(M)$, and all neighbors of $v$ in $H$ lie in
    $V(X_i)$ since $v$ is not a cut-vertex of $H$. As
    $\deg_H(v)\ge \delta(G)-1\ge 2$, we have $|V(X_i)| \ge 3$, and hence
    $X_i$, being a block on at least three vertices, is
    $2$-connected.
\end{proof}

Following Lemma~\ref{lem:structure}, let $x_i$ be the unique
cut-vertex of $H$ contained in $X_i$ for $i\in\{1,2\}$.

\begin{observation}\label{obs:degree}
    Every vertex $u$ in the interior of $X_i$ satisfies
    \[
        \deg_{X_i}(u)\ge
        \begin{cases}
            d & \text{if } u\notin V(M),\\
            d-1 & \text{if } u\in V(M).
        \end{cases}
    \]
\end{observation}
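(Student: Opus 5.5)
The plan is to compare the degree of $u$ in $G$, in $H$, and in $X_i$. Fix $i\in\{1,2\}$ and a vertex $u$ in the interior of $X_i$, that is, $u\in V(X_i)$ and $u$ is not a cut-vertex of $H$.

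The first step is to show that every edge of $H$ at $u$ lies in $X_i$. Each edge of $H$ belongs to exactly one block, so the edges of $H$ at $u$ lie in blocks containing $u$. A vertex lying in two distinct blocks of a connected graph is a cut-vertex. Since $u$ is not a cut-vertex of $H$, the block $X_i$ is the only block of $H$ containing $u$. Hence every edge of $H$ incident with $u$ lies in $X_i$, and
\[
\deg_{X_i}(u)=\deg_H(u).
\]
Here we use that $H$ is connected by Lemma~\ref{lem:structure}, so its block structure is the one described there.

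The second step relates $\deg_H(u)$ to $\deg_G(u)$. Since $H=G-M$ and $M$ is a matching, at most one edge of $G$ at $u$ is deleted. If $u\notin V(M)$, no edge at $u$ is deleted, and so $\deg_H(u)=\deg_G(u)\ge\delta(G)\ge d$. If $u\in V(M)$, exactly one edge at $u$ is deleted, and so $\deg_H(u)=\deg_G(u)-1\ge d-1$.

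Combining the two steps gives both cases of the inequality. There is no real obstacle here; the only point needing care is the fact that a non-cut-vertex belongs to a unique block, which is standard.
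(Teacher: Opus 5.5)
Your proof is correct and is the argument the paper has in mind. The paper states this observation without proof, relying on the same two facts it uses in the proof of Lemma~\ref{lem:structure}: a vertex that is not a cut-vertex of $H$ has all its $H$-neighbors inside its unique block $X_i$, and since $M$ is a matching, passing from $G$ to $H$ lowers each degree by at most one.
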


The next lemma is the engine behind all our constructions of
removable matchings. We delete an edge set inside the $2$-connected
blocks of $H$ and restore $2$-connectivity by an edge of $M$.
\begin{lemma}\label{lem:assembly}
    For each $2$-connected block $B$ of $H$, let $S_B\subseteq E(B)$ be
    an edge set such that $B-S_B$ is $2$-connected, and let
    $T\subsetneq M$. If $F=T\cup\bigcup_B S_B$ is a matching of $G$,
    where $B$ ranges over the $2$-connected blocks of $H$, then $F$ is a
    removable matching of $G$.
\end{lemma}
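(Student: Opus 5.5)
Since $T\subsetneq M$, there is an edge $e\in M\setminus T$. I will show that $G-F$ is $2$-connected by exhibiting a $2$-connected spanning subgraph. The natural candidate is
\[
H'=\Bigl(H-\bigcup_B S_B\Bigr)+e .
\]
First observe that $G-F$ contains $H'$. Indeed, $G-F$ contains every edge of $H$ not in any $S_B$, since $T\subseteq M$ and $M\cap E(H)=\emptyset$. It also contains $e$, because $e\notin T$ and $e\notin E(H)$. So $G-F$ is a spanning supergraph of $H'$, and it suffices to prove that $H'$ is $2$-connected.

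Let $H^-=H-\bigcup_B S_B$. The key step is to check that $H^-$ has the same block structure as $H$, with each $2$-connected block $B$ replaced by $B-S_B$. Every block of $H$ is either $2$-connected, in which case it becomes the $2$-connected graph $B-S_B$ on the same vertex set, or a $K_2$ block, which is untouched because $S$-sets only live in $2$-connected blocks. The blocks of $H$ and the cut-vertices are arranged in a tree, and distinct blocks share at most one vertex. Gluing $2$-connected graphs and bridges along the same tree pattern therefore gives a connected graph. Its blocks are exactly the $B-S_B$ and the bridges, since each $B-S_B$ is $2$-connected on $V(B)$ and no cycle can pass through two of these pieces, as the block tree is unchanged. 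Hence $H^-$ is connected and has the same block tree as $H$. In particular, by Lemma~\ref{lem:structure}, its block tree is a path whose leaf blocks are $X_1-S_{X_1}$ and $X_2-S_{X_2}$, and these have the same interiors as $X_1$ and $X_2$.

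Now, by Lemma~\ref{lem:structure}, the edge $e$ joins the interior of $X_1$ to the interior of $X_2$. These are interior vertices of the two leaf blocks of $H^-$, so Observation~\ref{obs:leaf_blocks}(b) gives that $H^-+e=H'$ is $2$-connected. Therefore $G-F$ is $2$-connected and $F$ is removable.

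The only delicate point is the middle step, namely arguing cleanly that deleting edges inside $2$-connected blocks while keeping each block $2$-connected preserves the block tree. I would phrase it directly: a vertex $c$ is a cut-vertex of $H^-$ if and only if it is a cut-vertex of $H$, because each block remains connected after deleting $c$. Moreover, leaf blocks and their interiors are unchanged.
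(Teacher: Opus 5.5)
Your proposal is correct and follows essentially the same route as the paper's proof. Both arguments pick $e\in M\setminus T$ and show that deleting each $S_B$ inside its $2$-connected block leaves a graph whose block tree is still a path, with leaf blocks $X_i-S_{X_i}$ having unchanged interiors. Both then apply Observation~\ref{obs:leaf_blocks}(b) to $(H-\bigcup_B S_B)+e$ and conclude because $G-F$ is a spanning supergraph of this graph.
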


\begin{proof}
    Let $B_1,B_2,\ldots,B_k$ be the blocks of $H$ in the order given by its block tree where $B_1=X_1$ and $B_k=X_2$.
    Let $S=\bigcup_{i=1}^k S_{B_i}$ where $S_{B_i}=\emptyset$ if $B_i\cong K_2$.
    Let $H'=H-S$ and  $B_i'=B_i-S_{B_i}$.
    As $B_i'$ is $2$-connected or isomorphic to $K_2$, it is a block of $H'$.  
    Furthermore, $V(B_i')\cap V(B_{i+1}')=V(B_i)\cap V(B_{i+1})$, thus, $H'$ is connected, and its block tree is a path with leaf blocks $B_1'$ and $B_k'$. 
    Hence, the graph obtained from $H'$ by adding an edge joining an
    interior vertex of $B_1'$ and an interior vertex of $B_k'$ is
    $2$-connected by Observation~\ref{obs:leaf_blocks}(b).
    Note that every edge of $M$ joins the interior of $B_1'$ and the interior of $B_k'$.
    Thus, $G-F$ is $2$-connected since $G-F$ is a spanning supergraph of $H'+e$ for some $e\in M\setminus T$. Therefore, $F$ is a removable matching.    
    \end{proof}

%=======================================================================
\section{Proof of Theorem~\ref{thm:main}}\label{sec:proof}
%=======================================================================

In this section, we prove Theorem~\ref{thm:main}. 
Suppose for the sake of contradiction that 
there exist an integer $d\ge 5$ and a $2$-connected graph $G$ with $\delta(G)\ge d$ and $|V(G)|\ge 2d$ that
has no removable $d$-matching. By Lemma~\ref{lem:min_matching}, $G$
contains a $d$-matching, which is not removable since $G$ has no
removable $d$-matching. Thus $G$ contains a minimal non-removable
matching $M$ of size at most $d$.

Let $H=G-M$. By Lemma~\ref{lem:structure}, $H$ has exactly two leaf blocks $X_1$ and $X_2$, 
and for $i=1,2$, 
let $x_i$ be the cut-vertex of $H$ contained in $X_i$.
Write
$M=\{u_1v_1,\ldots,u_mv_m\}$ with $u_j\in V(X_1)\setminus \{x_1\}$ and
$v_j\in V(X_2)\setminus\{x_2\}$ for all $j$, where $m=|M|$. 
Set $W_i=\bigl(V(M)\cap V(X_i)\bigr)\cup\{x_i\}$, so that
$|W_i|=m+1$. See Figure~\ref{fig:setup}.
For $i=1,2$, let $N_i$ be a maximum removable matching of $X_i$ with
$N_i\subseteq E(X_i-W_i)$ and set $a_i=|N_i|$ and
$P_i=V(X_i)\setminus\bigl(W_i\cup V(N_i)\bigr)$. 

\begin{figure}[t]
\centering
\begin{tikzpicture}[scale=1.0,
    block/.style={draw, fill=gray!12},
    vtx/.style={circle, fill=black, inner sep=1.4pt},
    medge/.style={thick, dashed},
    nedge/.style={line width=1.6pt}]
    \draw[block] (-2.9,0) ellipse (1.7 and 1.15);
    \draw[block] (2.9,0)  ellipse (1.7 and 1.15);
    \node[vtx, label={[label distance=-2pt]180:{\small $x_1$}}] (x1) at (-1.2,0) {};
    \node[vtx, label={[label distance=-2pt]0:{\small $x_2$}}] (x2) at (1.2,0) {};
    \draw (x1) -- (x2);
    \node[vtx, label={[label distance=-2pt]180:{\small $u_1$}}] (u1) at (-2.3,0.75) {};
    \node[vtx, label={[label distance=-2pt]180:{\small $u_2$}}] (u2) at (-2.3,0.22) {};
    \node[vtx, label={[label distance=-2pt]180:{\small $u_m$}}] (u3) at (-2.3,-0.6) {};
    \node at (-2.32,-0.24) {\scriptsize $\vdots$};
    \node[vtx, label={[label distance=-2pt]0:{\small $v_1$}}] (v1) at (2.3,0.75) {};
    \node[vtx, label={[label distance=-2pt]0:{\small $v_2$}}] (v2) at (2.3,0.22) {};
    \node[vtx, label={[label distance=-2pt]0:{\small $v_m$}}] (v3) at (2.3,-0.6) {};
    \node at (2.32,-0.24) {\scriptsize $\vdots$};
    \draw[medge] (u1) to[bend left=30] (v1);
    \draw[medge] (u2) to[bend left=22] (v2);
    \draw[medge] (u3) to[bend right=30] (v3);
    \node at (0,1.85) {\small $M$};
    \draw[->, thin] (0,1.65) -- (0,1.15);
    \node[vtx] (n1a) at (-4.05,0.5) {}; \node[vtx] (n1b) at (-3.55,0.85) {};
    \node[vtx] (n1c) at (-4.05,-0.4) {}; \node[vtx] (n1d) at (-3.55,-0.75) {};
    \draw[nedge] (n1a) -- (n1b); \draw[nedge] (n1c) -- (n1d);
    \node at (-4.35,0.05) {\small $N_1$};
    \node[vtx] (n2a) at (4.05,0.5) {}; \node[vtx] (n2b) at (3.55,0.85) {};
    \node[vtx] (n2c) at (4.05,-0.4) {}; \node[vtx] (n2d) at (3.55,-0.75) {};
    \draw[nedge] (n2a) -- (n2b); \draw[nedge] (n2c) -- (n2d);
    \node at (4.35,0.05) {\small $N_2$};
    \node at (-2.9,1.45) {\small $X_1$};
    \node at (2.9,1.45)  {\small $X_2$};
\end{tikzpicture}
\caption{The matching $M$ (dashed) and maximum removable matchings
$N_i$ (bold) of the leaf blocks $X_i$ of $H=G-M$. The part of $H$ between $x_1$
and $x_2$ is drawn as a single edge; the case $x_1=x_2$ is also
possible.}
\label{fig:setup}
\end{figure}

Let 
\[
    M_j=(M\setminus\{u_jv_j\})\cup N_1\cup N_2
\]
for $j=1,2,\ldots,m$. Note that $|M_j|=a_1+a_2+m-1$ and
by Lemma~\ref{lem:assembly}, $M_j$ is a removable matching of $G$. 
Since $G$ has no removable $d$-matching, it follows that 
\begin{equation}\label{eq:base}
a_1+a_2+m-1 \le d-1.
\end{equation}

\begin{lemma}\label{lem:matching+edge}
    For each $j\in\{1,\ldots,m\}$, if $x_1u_j\in E(X_1)$, then
    $M_j\cup\{x_1u_j\}$ is a removable matching of $G$, and if
    $x_2v_j\in E(X_2)$, then $M_j\cup\{x_2v_j\}$ is a removable
    matching of $G$.
\end{lemma}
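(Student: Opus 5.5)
The plan is to check first that $F=M_j\cup\{x_1u_j\}$ is a matching, and then that $G-F$ is $2$-connected. For the latter I will show that removing the single edge $x_1u_j$ from a $2$-connected graph already supplied by Lemma~\ref{lem:assembly} creates no cut-vertex. The statement for $x_2v_j$ is symmetric, so I only treat $x_1u_j$.

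\emph{Matching.} The vertex $u_j$ is uncovered by $M\setminus\{u_jv_j\}$. It is also uncovered by $N_1\cup N_2$, because $u_j\in W_1$, $N_1\subseteq E(X_1-W_1)$, and $u_j\notin V(X_2)$. The vertex $x_1$ is not an end of any edge of $M$, since by Lemma~\ref{lem:structure} the ends of edges of $M$ are interior vertices of the leaf blocks. Moreover $x_1\in W_1$, so $x_1\notin V(N_1)$. Finally, $x_1\in V(X_2)$ only when $x_1=x_2\in W_2$, and in that case $x_1\notin V(N_2)$ as well. Hence $F$ is a matching.

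\emph{Removability.} Set $L=(H-N_1-N_2)+u_jv_j=G-M_j$; it is $2$-connected by Lemma~\ref{lem:assembly}. Then $G-F=L-e$ with $e=x_1u_j$. A graph obtained from a $2$-connected graph by deleting one edge is connected. If $L-e$ had a cut-vertex $c$, then $c\notin\{x_1,u_j\}$ and $c$ would separate $x_1$ from $u_j$ in $L-e$, since otherwise re-adding $e$ would leave $c$ a cut-vertex of $L$. So it suffices to exhibit, for every $c\notin\{x_1,u_j\}$, an $x_1$--$u_j$ path in $L-e-c$. Let $X_i'=X_i-N_i$; each $X_i'$ is $2$-connected by the choice of $N_i$.

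We split according to where $c$ lies.
\begin{enumerate}[(a)]
\item If $c\notin V(X_1)$, use that $X_1'$ is $2$-connected. Then $X_1'-e$ is connected and contains an $x_1$--$u_j$ path avoiding $c$.
\item If $c\in V(X_1)\setminus\{x_1,u_j\}$, route around $X_1$ instead: follow the edge $u_jv_j$, then a $v_j$--$x_2$ path in the connected graph $X_2'$, then an $x_2$--$x_1$ path through the blocks of $H$ between $X_2$ and $X_1$ (with edges of $N_1\cup N_2$ removed, which stay connected since each block minus its $S_B$ is $2$-connected or $K_2$). None of these vertices lies in the interior of $X_1$, so the path avoids $c$ and does not use $e$.
\end{enumerate}
Thus $L-e$ is $2$-connected.

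The only step needing care is (b): one must check that the detour via $v_j$, $X_2'$ and the block path between $x_2$ and $x_1$ stays outside the interior of $X_1$. This follows from the block-path structure given by Lemma~\ref{lem:structure}, exactly as in the proof of Lemma~\ref{lem:assembly}.
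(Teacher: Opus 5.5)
Your proof is correct and follows essentially the same route as the paper. Both arguments start from the $2$-connectivity of $G-M_j$ (Lemma~\ref{lem:assembly}) and rule out a vertex separating $x_1$ from $u_j$ in $G-M_j-x_1u_j$ using two routes. The first is an $x_1$--$u_j$ path inside $X_1-N_1$ that avoids $x_1u_j$. The second is the edge $u_jv_j$ followed by a $v_j$--$x_1$ path that stays outside the interior of $X_1$. The paper argues by contradiction: a cut-vertex would have to lie on both paths, and these meet only in $x_1$. You instead split on where $c$ lies, and you also spell out the matching check that the paper leaves as ``clearly''.
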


\begin{proof}
By symmetry, it is enough to prove the first statement. 
Let $f=x_1u_j$ and let $M'=M_j\cup\{f\}$.
Clearly, $M'$ is a matching. 

Suppose $M'$ is not removable. 
Let $H'=G-M'$.
Since $M_j$ is removable, $G-M_j$ is $2$-connected, and so
$H'=G-M'=(G-M_j)-f$ is connected. 
Let $c$ be a cut-vertex of $H'$.
Since $f=x_1u_j$, $c$ separates $x_1$ and $u_j$.
Let $(A,B)$ be a separation of $H'$ with cut-vertex $c$ such that $x_1 \in A\setminus B$ and $u_j \in B\setminus A$.
Since $v_j$ is adjacent to $u_j$ in $H'$, 
we have $v_j \in B$.

There is a path $P$ from $x_1$ to $v_j$ in $H-(N_1\cup N_2)$ since
$H-(N_1\cup N_2)$ is connected. Since $x_1$ separates the interior of
$X_1$ and $\{v_j\}$ in $H$, we have $V(P)\cap V(X_1)=\{x_1\}$.
Therefore, $P$ is a path of $H'$, because no edge of $P$ is removed
in $H'$.

In addition, there is a path $Q$ from $x_1$ to $u_j$ in $X_1-N_1$ not using $f$ since $X_1-N_1$ is  $2$-connected.
Note that $Q$ is also a path in $H'$. 
Since $P$ and $Q$ are paths from $x_1$ to $v_j$ and $u_j$, respectively, which are vertices of $B$, 
the cut-vertex $c$ is contained in $V(P)$ and $V(Q)$ yielding a contradiction since $V(P)\cap V(Q)=\{x_1\}$.
Therefore, $M'$ is removable.
\end{proof}

Let $s_i=|V(N_i)\cup W_i|=2a_i+m+1$ for $i=1,2$.

\begin{claim}\label{cla:value_s}
    We have
    \[
        (s_1,s_2)\in\bigl\{(d,d),\, (d,d+2),\,(d+2,d),\, (d+1,d+1)\bigr\}.
    \]
    In particular, for all $j$, $|M_j|\in\{d-2,\,d-1\}$ and
    $|M_j|=d-2$ if and only if $(s_1,s_2)=(d,d)$.
\end{claim}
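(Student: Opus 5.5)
We obtain a lower bound on each $s_i$ from Lemma~\ref{lem:block}(ii) and an upper bound on $s_1+s_2$ from \eqref{eq:base}, then combine them.

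\textbf{Lower bound.} Apply Lemma~\ref{lem:block} to the $2$-connected block $X_i$ with $W=W_i$, so that $w=m+1$. The hypotheses hold:
\begin{itemize}
\item[] $W_i\subsetneq V(X_i)$, because by Observation~\ref{obs:degree} an interior vertex $u_1$ has at least $d-1$ neighbours in $X_i$, so $|V(X_i)|\ge d>m+1$ unless $m$ is large;
\item[] every vertex of $V(X_i)\setminus W_i$ is an interior vertex of $X_i$ not in $V(M)$, so it has degree at least $d$ in $X_i$ by Observation~\ref{obs:degree}.
\end{itemize}
Here $N_i$ is exactly the maximum removable matching of $X_i$ inside $E(X_i-W_i)$. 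Lemma~\ref{lem:block}(ii) then gives
\[
s_i=2a_i+m+1\ge d \qquad (i=1,2).
\]
One boundary case needs care: if $V(X_i)=W_i$, the lower bound should come directly from counting. An interior vertex $u_j$ has $\deg_{X_i}(u_j)\ge d-1$, so $|V(X_i)|\ge d$, and hence $s_i\ge |V(X_i)|\ge d$. I would treat this case separately rather than through the lemma.

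\textbf{Upper bound.} By \eqref{eq:base}, $a_1+a_2+m\le d$. Therefore
\[
s_1+s_2=2(a_1+a_2)+2m+2\le 2d-2m+2m+2=2d+2,
\]
where we used $2(a_1+a_2)\le 2(d-m)$.

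\textbf{Combining.} We now have $s_1,s_2\ge d$ and $s_1+s_2\le 2d+2$. Moreover $s_1-s_2=2(a_1-a_2)$ is even, so $s_1\equiv s_2 \pmod 2$. This leaves exactly the pairs $(d,d)$, $(d,d+1)$, $(d+1,d)$, $(d,d+2)$, $(d+2,d)$, $(d+1,d+1)$. The pairs with $s_1+s_2=2d+1$ violate the parity condition, so the four listed pairs remain.

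\textbf{The consequence for $|M_j|$.} We have $|M_j|=a_1+a_2+m-1=(s_1+s_2)/2-2$. This equals $d-2$ when $s_1+s_2=2d$ and $d-1$ when $s_1+s_2=2d+2$. Among the four pairs, the sum is $2d$ only for $(d,d)$, which gives the ``if and only if''.

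The main obstacle is checking the hypotheses of Lemma~\ref{lem:block}, in particular the boundary case $V(X_i)=W_i$ and the degree condition on non-$W_i$ vertices. Once those hold, everything else is arithmetic.
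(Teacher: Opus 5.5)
Your proposal is correct and follows the paper's proof: you split into $V(X_i)=W_i$ (handled by the degree count at $u_1$) versus otherwise (Lemma~\ref{lem:block}(ii)) to get $s_i\ge d$, use \eqref{eq:base} to get $s_1+s_2\le 2d+2$, and then use parity to reduce to the four pairs. Using $s_1-s_2$ even instead of $s_1+s_2$ even is the same thing, and the ``unless $m$ is large'' reasoning in your first bullet is unnecessary, since the dichotomy $W_i=V(X_i)$ versus $W_i\subsetneq V(X_i)$ already covers every case.
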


\begin{proof}
    We first show that $s_1\ge d$. If $V(X_1)=W_1$, then, since
    $\deg_{X_1}(u_1)\ge d-1$ by Observation~\ref{obs:degree}, we have
    $s_1\ge|W_1|=|V(X_1)|\ge d$, and otherwise, $s_1\ge d$ by
    Lemma~\ref{lem:block}(ii). Similarly $s_2\ge d$.
    
    Since $s_1+s_2=2(a_1+a_2+m)+2$, it follows that $s_1+s_2$ is even, and 
    by \eqref{eq:base}, we have $s_1+s_2=2(a_1+a_2+m)+2 \le 2d+2$.
    Thus $s_1+s_2\in \{2d,2d+2\}$, and so $(s_1,s_2)\in\bigl\{(d,d),\, (d,d+2),\,(d+2,d),\, (d+1,d+1)\bigr\}$.
    Furthermore, since $|M_j|=a_1+a_2+m-1 = \frac{s_1+s_2}{2} -2$, $|M_j| \in \{d-2,d-1\}$ and $|M_j|=d-2$ if and only if $s_1+s_2=2d$, that is, $(s_1,s_2)=(d,d)$.
\end{proof}

\begin{claim}\label{cla:case_d_edge}
    Suppose that $s_i=d$ for some $i\in\{1,2\}$, and let
    $u\in V(M)\cap V(X_i)$. If $P_i\neq\emptyset$, then $pu$ is a
    removable edge of $X_i-N_i$ for every $p\in P_i$. Otherwise,
    $x_iu$ is a removable edge of $X_i-N_i$.
\end{claim}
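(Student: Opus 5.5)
The plan is to use the degree condition $s_i=d$ to show that $X_i$ is extremely dense around $P_i$, and then to verify removability directly.

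Fix $i$ with $s_i=d$, and write $Y=X_i-N_i$, which is $2$-connected by the choice of $N_i$. Recall that $s_i=2a_i+|W_i|$ with $|W_i|=m+1$. Every vertex of $V(X_i)\setminus W_i$ is an interior vertex of $X_i$ not in $V(M)$, so by Observation~\ref{obs:degree} it has degree at least $d$ in $X_i$. Since $d\ge 5$, we have $s_i=d\le 2d-5$. Hence Lemma~\ref{lem:block}(i), applied to $X_i$ with $W=W_i$, shows that $P_i$ is independent. For $p\in P_i$ this gives $N_{X_i}(p)\subseteq W_i\cup V(N_i)$, a set of size $s_i=d$, while $\deg_{X_i}(p)\ge d$. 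Therefore $p$ is complete to $W_i\cup V(N_i)$. Moreover $p\notin V(N_i)$, so $p$ keeps all these neighbours in $Y$.

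\textbf{Case $P_i\neq\emptyset$.} Fix $p\in P_i$. Then $pu\in E(Y)$, because $u\in W_i$. Suppose $Y-pu$ has a cut-vertex $c$; it must separate $p$ and $u$. The vertex $u$ is not in $V(N_i)$, so $\deg_Y(u)=\deg_{X_i}(u)\ge d-1$, and hence $u$ has at least $d-2\ge 3$ neighbours in $Y-pu$. Let $w\neq c$ be such a neighbour.
\begin{itemize}
\item If $w\in W_i\cup V(N_i)$, then $w$ is adjacent to $p$, so $u\,w\,p$ is a path avoiding $c$.
\item If $w\in P_i$, then $w$ and $p$ are both complete to $W_i\cup V(N_i)$. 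This set has at least $d-1\ge 2$ elements other than $u$, so some common neighbour $z\notin\{u,c\}$ exists, and $u\,w\,z\,p$ is a path avoiding $c$.
\end{itemize}
Either way $c$ does not separate $p$ and $u$, a contradiction. So $pu$ is removable in $Y$.

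\textbf{Case $P_i=\emptyset$.} Now $V(X_i)=W_i\cup V(N_i)$, so $|V(X_i)|=d$. Any vertex of $V(N_i)$ is an interior vertex not in $V(M)$, so it would need degree at least $d$ in a graph on $d$ vertices, which is impossible. Hence $a_i=0$, $N_i=\emptyset$ and $Y=X_i$. Every interior vertex then lies in $V(M)$ and has degree at least $d-1=|V(X_i)|-1$, so it is adjacent to every other vertex, including $x_i$. Thus $X_i\cong K_d$. In particular $x_iu\in E(X_i)$, and $K_d-x_iu$ is $2$-connected since $d\ge 5$.

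The step that needs the most care is the separation argument in the first case, namely checking that each neighbour $w$ of $u$ reaches $p$ while avoiding $c$. This is handled by the complete adjacency of $P_i$ to $W_i\cup V(N_i)$ together with the degree slack coming from $d\ge5$.
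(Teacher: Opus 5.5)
Your proof is correct and follows the paper's argument. Lemma~\ref{lem:block}(i) together with the degree bound makes $P_i$ complete to $W_i\cup V(N_i)$, and the case $P_i=\emptyset$ forces $X_i\cong K_d$ exactly as in the paper. The one difference is the removability of $pu$ in $X_i-N_i$: the paper cites Lemma~\ref{lem:bipartite} with the partition $\{P_i,\,V(N_i)\cup W_i\}$, whereas you give a direct argument finding a $u$--$p$ path that avoids $c$, which amounts to reproving that lemma in this special case.
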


\begin{proof}
    Without loss of generality, we prove the claim for $i=1$.
    If $P_1\neq\emptyset$, then, since $2a_1+|W_1|=d\le 2d-5$, the set
    $P_1$ is independent by Lemma~\ref{lem:block}(i).
    Let $p\in P_1$. 
    Since, by Observation~\ref{obs:degree}, $p$ has degree at least $d$ in $X_1$, it is adjacent to all vertices in  $W_1\cup V(N_1)$. 
    Thus, $pu$ is an edge of $X_1-N_1$.
    In addition, $\{P_1,\,V(N_1)\cup W_1\}$ is a partition of
    $V(X_1-N_1)$ such that $P_1$ is complete to $V(N_1)\cup W_1$, and
    we have $|V(N_1)\cup W_1|=s_1=d\ge 3$ and
    $\deg_{X_1-N_1}(u)\ge d-1\ge 3$. Hence $pu$ is removable in
    $X_1-N_1$ by Lemma~\ref{lem:bipartite}.
      
    If $P_1=\emptyset$, then $V(X_1)=W_1\cup V(N_1)$. 
    If $V(N_1)\neq \emptyset$, say $z \in V(N_1)$, then since $\deg_{X_1}(z)\ge d$, we have $d=s_1=|V(N_1)\cup W_1|=|V(X_1)|\ge d+1$, a contradiction. 
    Hence, $V(N_1)=\emptyset$, and so $V(X_1)=W_1$. 
    Since $\deg_{X_1}(u_j)\ge d-1$, $u_j$ is adjacent to all vertices of $X_1$, thus, $X_1$ is a complete graph on $d\,(\ge 5)$ vertices. 
    Hence,  $x_1u\in E(X_1)$.
    Since $N_1=\emptyset$, $(X_1-N_1)-x_1u=X_1-x_1u$  is isomorphic to $K_d-f$ for any edge $f$ of $K_d$, 
    which is $2$-connected.
    Thus, $x_1u$  is a removable edge of $X_1-N_1$.
\end{proof}

\begin{claim}\label{cla:interior}
$H$ has either exactly two blocks $X_1$ and $X_2$, 
or exactly three blocks $X_1$, $X_2$, and the interior block consisting of the single edge $x_1x_2$.
\end{claim}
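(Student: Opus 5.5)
The plan is to show that the part of $H$ strictly between the two leaf blocks contains no vertex other than $x_1,x_2$. Once that is done, the statement follows.

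\emph{Reduction.} Let $Z$ be the set of vertices of $H$ not in the interior of $X_1$ or of $X_2$. Suppose $Z=\{x_1,x_2\}$.
\begin{enumerate}[(a)]
\item If $x_1=x_2$, then $H$ has exactly the two blocks $X_1$ and $X_2$.
\item If $x_1\neq x_2$, then every interior block of the block path has all its vertices in $Z=\{x_1,x_2\}$. Each block contains at least two vertices and consecutive blocks share exactly one cut-vertex. Hence there is exactly one interior block, and it is the edge $x_1x_2$.
\end{enumerate}
So it suffices to derive a contradiction from the existence of a vertex $z\in Z\setminus\{x_1,x_2\}$.

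\emph{Finding a $2$-connected interior block.} Every edge of $M$ joins the interiors of $X_1$ and $X_2$ (Lemma~\ref{lem:structure}), so $z\notin V(M)$. Therefore $\deg_H(z)=\deg_G(z)\ge d\ge 5$, and all neighbours of $z$ lie in interior blocks of $H$. Because the block tree is a path, $z$ lies in at most two blocks. Consequently some interior block $B$ containing $z$ has at least three vertices, so $B$ is $2$-connected.

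\emph{A removable edge of $B$.} Let $W_B$ be the set of cut-vertices of $H$ in $B$; since $B$ is an interior block of a block path, $|W_B|=2$. Every vertex of $B$ outside $W_B$ is not a cut-vertex of $H$ and is not in $V(M)$. Hence all its neighbours lie in $B$, and its degree in $B$ is at least $d\ge 2+\lceil 3/2\rceil$. Such a vertex exists: $B$ has at least three vertices and only two of them are in $W_B$. Since it has at least $d>2$ neighbours, one of them lies outside $W_B$, so $E(B-W_B)\neq\emptyset$. Theorem~\ref{thm:key} then yields a removable edge $e$ of $B$ with both ends outside $W_B$. In particular $e$ avoids $x_1$ and $x_2$, and it is disjoint from $V(M)$, $V(N_1)$ and $V(N_2)$.

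\emph{Building a removable $d$-matching.} Take $S_B=\{e\}$ in Lemma~\ref{lem:assembly}. Then $M_j\cup\{e\}$ is a removable matching of size $|M_j|+1$. By Claim~\ref{cla:value_s}, $|M_j|\in\{d-2,d-1\}$.
\begin{enumerate}[(a)]
\item If $|M_j|=d-1$, then $M_j\cup\{e\}$ is a removable $d$-matching, a contradiction.
\item Otherwise $(s_1,s_2)=(d,d)$, and one more edge is needed. Apply Claim~\ref{cla:case_d_edge} with $i=1$ and $u=u_j$. This gives an edge $f$, either $pu_j$ with $p\in P_1$ or $x_1u_j$, such that $X_1-(N_1\cup\{f\})$ is $2$-connected.
\end{enumerate}
In case (b), set $S_{X_1}=N_1\cup\{f\}$, $S_{X_2}=N_2$, $S_B=\{e\}$ and $T=M\setminus\{u_jv_j\}$. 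The set $T\cup S_{X_1}\cup S_{X_2}\cup S_B$ is a matching:
\begin{enumerate}[(a)]
\item $u_j$ is freed because $u_jv_j\notin T$;
\item $p\in P_1$ avoids $V(N_1)\cup W_1$, and $x_1$ is covered by no other edge;
\item $e$ lies in $B$ and avoids $x_1$ and all the other edges.
\end{enumerate}
Lemma~\ref{lem:assembly} then gives a removable matching of size $(d-2)+2=d$, again a contradiction.

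The main obstacle I expect is verifying that $B$ really is $2$-connected and that Theorem~\ref{thm:key} applies to it, which is the degree count at $z$ above. The matching-compatibility check in the $(d,d)$ case also needs care, in particular the subcase where $f=x_1u_j$.
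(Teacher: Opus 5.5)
Your proof is correct, but it takes a somewhat different route from the paper's.

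\textbf{The paper's route.} The paper first shows that every interior block $Y$ is $K_2$. If $Y$ were $2$-connected, Lemma~\ref{lem:block}(ii), applied to $Y$ with $W$ the two cut-vertices of $Y$, gives a removable matching $M'$ of $Y$ of size at least $\lceil (d-2)/2\rceil\ge 2$; this is where $d\ge 5$ enters. Then $M_1\cup M'$ has size at least $(d-2)+2=d$ by Claim~\ref{cla:value_s}, whether $|M_1|$ is $d-2$ or $d-1$. Two interior blocks are then excluded by observing that a cut-vertex lying in two $K_2$ blocks has degree $2$.

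\textbf{Your route.} You organize the argument around a vertex $z\notin V(X_1)\cup V(X_2)$. This is essentially the same degree count, packaged so that it produces a $2$-connected interior block $B$. You then extract only a single removable edge $e$ of $B$ from Theorem~\ref{thm:key}. Because of that, you must split on $|M_j|$, and in the $(s_1,s_2)=(d,d)$ case borrow a second edge $f$ from $X_1$ via Claim~\ref{cla:case_d_edge}.

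Your compatibility check in that case is sound. The one point you left implicit is the subcase $f=x_1u_j$ with $x_1=x_2$. There $x_1\in W_2$, so $x_1\notin V(N_2)$; and $x_1\notin V(M)$ and $x_1\notin V(e)$, so $x_1$ is indeed free.

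\textbf{Comparison.} The paper's version gets two edges from $B$ at once through the stronger bound of Lemma~\ref{lem:block}(ii), so it finishes uniformly with no case split and no appeal to Claim~\ref{cla:case_d_edge}. Your version uses a weaker input from $B$ (one removable edge) but pays with an extra dependency and a case analysis. You could shorten your argument to match the paper's by replacing $e$ with a maximum removable matching of $B$ inside $E(B-W_B)$.
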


\begin{proof}
    We first show that if $Y$ is an interior block of $H$, 
    then it is $K_2$.
    Suppose to the contrary that $Y$ is $2$-connected.
    Since the block tree of $H$ is a path, 
    $Y$ contains exactly two
    cut-vertices of $H$,  say $y'$, $y''$.
    
    Let $M'$ be a maximum removable matching of $Y$ with $M' \subseteq E(Y-\{y',y''\})$.
    Since every vertex of $V(Y)\setminus\{y',y''\}$ has degree in $Y$ at least $d$, 
    it follows by Lemma~\ref{lem:block}(ii) that 
    $|M'|\ge\bigl\lceil\tfrac{d-2}{2}\bigr\rceil
         =\bigl\lceil\tfrac{d}{2}\bigr\rceil-1\ge 2$.

    Note that $F=M_1\cup M'$ is a matching of $G$ since
    $V(M')\subseteq V(Y)\setminus\{y',y''\}$ is disjoint from
    $V(X_1)\cup V(X_2)$. By Lemma~\ref{lem:assembly}, $F$ is a removable
    matching of size $|M_1|+|M'|\ge(d-2)+2=d$ by Claim~\ref{cla:value_s},
    a contradiction. Therefore, every interior block of $H$ is $K_2$.

    Finally, suppose that $H$ has at least two interior blocks. Then some
    cut-vertex $c$ of $H$ lies in two interior blocks, both isomorphic to
    $K_2$, so $\deg_H(c)=2$. Since $c\notin V(M)$, we have
    $\deg_G(c)=\deg_H(c)=2<d$, a contradiction. Hence $H$ has at most one
    interior block, and if it has one, its two cut-vertices are $x_1$ and
    $x_2$, so it is the single edge $x_1x_2$.
\end{proof}

\begin{claim}\label{cla:case_dd}
    Suppose that $s_i=d$ for some $i\in\{1,2\}$.
    Then $G$ has a removable $d$-matching.
\end{claim}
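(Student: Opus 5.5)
The plan is to take $i=1$ by symmetry and enlarge one of the matchings $M_j$. Fix $j$ and put $u=u_j$. By Claim~\ref{cla:value_s}, $|M_j|\in\{d-2,d-1\}$, so we must add one edge in general and two edges when $(s_1,s_2)=(d,d)$. The extra edges come from Claim~\ref{cla:case_d_edge}. The point is that $u_j$ and $v_j$ are left uncovered by $M_j$, because $u_jv_j$ is the one edge of $M$ that $M_j$ omits.

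\emph{One extra edge on the $X_1$ side.} First suppose $P_1\neq\emptyset$, and pick $p\in P_1$. By Claim~\ref{cla:case_d_edge}, $pu_j$ is a removable edge of $X_1-N_1$, so $X_1-(N_1\cup\{pu_j\})$ is $2$-connected. Since $p\notin W_1\cup V(N_1)$ and $u_j\notin V(M_j)$, the set $M_j\cup\{pu_j\}$ is a matching. Apply Lemma~\ref{lem:assembly} with $S_{X_1}=N_1\cup\{pu_j\}$, $S_{X_2}=N_2$ and $T=M\setminus\{u_jv_j\}$; it shows that $M_j\cup\{pu_j\}$ is removable. Now suppose $P_1=\emptyset$. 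The proof of Claim~\ref{cla:case_d_edge} then gives $N_1=\emptyset$, $X_1\cong K_d$ and $x_1u_j\in E(X_1)$. Lemma~\ref{lem:matching+edge} shows that $M_j\cup\{x_1u_j\}$ is removable. In both cases we obtain a removable matching of size $|M_j|+1$. If $|M_j|=d-1$ we are done.

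\emph{Two extra edges when $(s_1,s_2)=(d,d)$.} Here $|M_j|=d-2$, and Claim~\ref{cla:case_d_edge} also applies to $X_2$. So we add one edge on each side: $e_1\in\{pu_j,\,x_1u_j\}$ and $e_2\in\{qv_j,\,x_2v_j\}$, where $q\in P_2$. There are three subcases.
\begin{enumerate}[(a)]
\item If $e_1=pu_j$ and $e_2=qv_j$, Lemma~\ref{lem:assembly} applies directly with $S_{X_1}=N_1\cup\{pu_j\}$ and $S_{X_2}=N_2\cup\{qv_j\}$.
\item If exactly one of the two edges uses a cut-vertex, say $e_1=pu_j$ and $e_2=x_2v_j$, I would rerun the proof of Lemma~\ref{lem:matching+edge} on the $X_2$ side. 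The only change is that $X_1-N_1$ is replaced by the $2$-connected graph $X_1-N_1-pu_j$. The path $P$ from $x_2$ to $u_j$ still avoids the deleted edges except inside $X_1$, and it can be rerouted within the $2$-connected graph $X_1-N_1-pu_j$. Hence the same separation argument gives a contradiction.
\item If both edges use cut-vertices, $e_1=x_1u_j$ and $e_2=x_2v_j$, we need $x_1\neq x_2$ for $\{e_1,e_2\}$ to be a matching. Granting that, the two-sided version of the path argument works. The path from $x_1$ to $v_j$ runs through the interior part of $H$ (the edge $x_1x_2$ by Claim~\ref{cla:interior}) and then through $X_2-x_2v_j$, which is $K_d$ minus an edge and hence $2$-connected. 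The path from $x_1$ to $u_j$ lies inside $X_1-x_1u_j$.
\end{enumerate}

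\emph{Main obstacle.} The delicate point is subcase (c) together with the degenerate possibility $x_1=x_2$. If $P_1=P_2=\emptyset$, then both $X_i$ are copies of $K_d$ with $V(X_i)=W_i$, so $m=d-1$. If moreover $x_1=x_2$, then by Claim~\ref{cla:interior} we have $H=X_1\cup X_2$, and so $|V(G)|=2d-1<2d$, contradicting the hypothesis. Hence $x_1\neq x_2$ whenever (c) occurs, and $x_1x_2$ is an edge of $H$. I expect the remaining work to be verifying carefully that the cut-vertex argument of Lemma~\ref{lem:matching+edge} survives deleting one edge in each leaf block, in subcases (b) and (c).
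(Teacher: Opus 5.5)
Your proposal is correct and follows the paper's proof. You extend $M_j$ by the edge(s) supplied by Claim~\ref{cla:case_d_edge}, and you rule out $e_1$ and $e_2$ meeting at $x_1=x_2$ by the count $|V(G)|=2d-1$. The one difference is that your detour through Lemma~\ref{lem:matching+edge}, and its rerun in subcases (b) and (c), is unnecessary: Claim~\ref{cla:case_d_edge} already says $x_iu$ is a removable edge of $X_i-N_i$, so Lemma~\ref{lem:assembly} with $S_{X_i}=N_i\cup\{e_i\}$ handles every subcase at once.
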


\begin{proof}
    Without loss of generality, let $s_1=d$.
    Let
        \[
        e_1=
        \begin{cases}
            pu_1 & \text{for some } p\in P_1, \text{ if } P_1\neq\emptyset,\\
            x_1u_1 & \text{if } P_1=\emptyset .
        \end{cases}
    \]
    
    By Claim~\ref{cla:case_d_edge}, $e_1$ is a removable edge of $X_1-N_1$. 
    So, if $s_2=d+2$, then $M_1\cup \{e_1\}$ is a removable $d$-matching by Lemma~\ref{lem:assembly}.
    Thus, we may assume $s_2=d$.
    We define $e_2 \in E(X_2)$ similar to $e_1$.
    Suppose $e_1$ and $e_2$ have a common vertex. 
    Then, $P_1=P_2=\emptyset$, $x_1=x_2$ and $e_1$ and $e_2$ are incident with $x_1$.
    Furthermore, $X_1$ and $X_2$ are the only blocks of $H$ by Claim~\ref{cla:interior}, implying that $|V(G)|=|V(X_1)|+|V(X_2)|-1=2d-1$, a contradiction.
    Therefore, $e_1$ and $e_2$ are independent, and so $M_1\cup \{e_1,e_2\}$ is a removable $d$-matching of $G$ by Lemma~\ref{lem:assembly}.
\end{proof}

By Claim~\ref{cla:case_dd}, $(s_1,s_2)=(d+1,d+1)$. In particular,
$2a_i+m=d$ for $i=1,2$, and $M_j$ is a removable $(d-1)$-matching for all $j$.
By Lemma~\ref{lem:matching+edge}, $x_1u_j \notin E(X_1)$ and 
$x_2v_j \notin E(X_2)$ for all $j$ since otherwise there is a removable $d$-matching.

\begin{claim}\label{cla:x_edge}
    For each $i\in\{1,2\}$, no edge of $X_i$ incident with $x_i$ is
    removable in $X_i$.
\end{claim}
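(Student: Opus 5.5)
Suppose, for a contradiction, that some edge $x_1y$ of $X_1$ is removable in $X_1$; the case $i=2$ is symmetric. The plan is to replace $N_1$ by a larger set of deletable edges inside $X_1$ that contains $x_1y$, and then to use Lemma~\ref{lem:assembly} to produce a removable $d$-matching, contradicting our assumption on $G$.

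\textbf{Setup.} We already know $x_1u_j\notin E(X_1)$ for all $j$, so $y\notin V(M)$. Let $X_1'=X_1-x_1y$, which is $2$-connected by assumption. Put $W'=W_1\cup\{y\}$, so $|W'|=m+2$. Every vertex of $V(X_1')\setminus W'$ is an interior vertex of $X_1$ outside $V(M)\cup\{y\}$. Its degree is unchanged in $X_1'$, so it is at least $d$ by Observation~\ref{obs:degree}. Let $N'$ be a maximum removable matching of $X_1'$ with $N'\subseteq E(X_1'-W')$, and let $a'=|N'|$. Lemma~\ref{lem:block}(ii) gives $2a'+m+2\ge d$. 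Since $d-m=2a_1$, this means $a'\ge a_1-1$.

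\textbf{Case $a'\ge a_1$.} Set $S_{X_1}=N'\cup\{x_1y\}$ and $S_{X_2}=N_2$. Then $X_1-S_{X_1}=X_1'-N'$ is $2$-connected, and $S_{X_1}$ is a matching because $x_1,y\in W'$. By Lemma~\ref{lem:assembly}, $M_1\cup N'\cup\{x_1y\}\cup N_2$ is a removable matching of $G$ of size at least $(m-1)+a_1+1+a_2=d$, a contradiction.

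\textbf{Case $a'=a_1-1$.} Here $|W'\cup V(N')|=d\le 2d-5$. By Lemma~\ref{lem:block}(i), the leftover set $P'=V(X_1')\setminus(W'\cup V(N'))$ is independent. Each $p\in P'$ has degree at least $d$ in $X_1'$, so it is adjacent to all $d$ vertices of $W'\cup V(N')$; hence $P'$ is complete to $W'\cup V(N')$.
\begin{itemize}
\item If $P'=\emptyset$, then $|V(X_1)|=d$. Since $\deg_{X_1}(u_1)\ge d-1$, the vertex $u_1$ is adjacent to all other vertices of $X_1$, including $x_1$. This contradicts $x_1u_1\notin E(X_1)$.
\item Otherwise pick $p\in P'$. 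Apply Lemma~\ref{lem:bipartite} to $X_1'-N'$ with the partition $\{P',\,W'\cup V(N')\}$. Its hypotheses hold: $|W'\cup V(N')|=d\ge 3$, and $\deg_{X_1'-N'}(u_1)=\deg_{X_1}(u_1)\ge d-1\ge 3$, because $u_1$ is adjacent to neither $x_1$ nor $y$ through a deleted edge and is not covered by $N'$. So $pu_1$ is removable in $X_1'-N'$.
\end{itemize}
In the second subcase, take $S_{X_1}=N'\cup\{x_1y,pu_1\}$. This is a matching, since $p,x_1,y,u_1$ are distinct and none is covered by $N'$, and $X_1-S_{X_1}$ is $2$-connected. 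Because $M_1$ omits the edge $u_1v_1$, the set $M_1\cup N'\cup\{x_1y,pu_1\}\cup N_2$ is a matching of $G$. By Lemma~\ref{lem:assembly} it is removable, and its size is $(m-1)+(a_1-1)+2+a_2=d$, again a contradiction.

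\textbf{Main obstacle.} The difficulty is the deficit $a'=a_1-1$ caused by adding $y$ to the forbidden set. The fix is the parity/tightness argument $2a'+m+2=d$. It forces the bipartite-complete structure, and then Lemma~\ref{lem:bipartite} supplies an edge to $u_1$, which is freed by dropping $u_1v_1$ in $M_1$.
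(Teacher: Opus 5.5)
Your proof is correct and follows the paper's argument essentially step for step: you add $y$ to the forbidden set, use Lemma~\ref{lem:block}(ii) to get $a'\ge a_1-1$, and in the tight case use Lemma~\ref{lem:block}(i) and Lemma~\ref{lem:bipartite} to add $pu_1$ after freeing $u_1$. The only cosmetic difference is that you rule out $P'=\emptyset$ via $x_1u_1\notin E(X_1)$ rather than via $|V(X_1)|\ge s_1=d+1$.

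There is one notational slip. In the paper $M_1=(M\setminus\{u_1v_1\})\cup N_1\cup N_2$ already contains $N_1$, so wherever you write $M_1\cup N'\cup\cdots$ you should write $(M\setminus\{u_1v_1\})\cup N'\cup\cdots$. Your size counts show this is what you intended.
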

\begin{proof}
    Without loss of generality, we prove the claim for $i=1$. 
    Suppose
    that there is a removable edge $e=x_1z$ of $X_1$.  Then,  $z\notin W_1$ since $x_1u_j \notin E(X_1)$.
    Let $X'=X_1-e$ and $W=W_1\cup\{z\}$.
    Note that $X'$ is $2$-connected and $|W|=m+2$. 
    Let $N_1'$ be a maximum removable matching of $X'$ with
    $N_1'\subseteq E(X'-W)$, and let $a_1'=|N_1'|$. Every vertex of
    $V(X')\setminus W$ has degree at least $d$ in $X'$ by Observation~\ref{obs:degree}, so
    Lemma~\ref{lem:block}(ii) gives $2a_1'+(m+2)\ge d=2a_1+m$, that is,
    $a_1'\ge a_1-1$.
    If $a_1'\ge a_1$, then
    $F=(M\setminus\{u_1v_1\})\cup N_1'\cup\{e\}\cup N_2$ is a matching
    and $X_1-(N_1'\cup\{e\})=X'-N_1'$ is $2$-connected, so, by
    Lemma~\ref{lem:assembly}, $F$ is a removable matching of size
    $(m-1)+a_1'+1+a_2\ge m+a_1+a_2=d$, a contradiction.
    Hence $a_1'=a_1-1$, and $2a_1'+|W|=2a_1+m=d$. By
    Lemma~\ref{lem:block}(i), the set $L=V(X')\setminus(W\cup V(N_1'))$
    is independent in $X'$. It is nonempty, since
    $|W\cup V(N_1')|=d<d+1=s_1\le|V(X_1)|$. Every $q\in L$ has degree at
    least $d$ in $X'$ with all its neighbors in $W\cup V(N_1')$, a set
    of size $d$, so $L$ is complete to $W\cup V(N_1')$. Fix $q\in L$.
    Since $\deg_{X'-N_1'}(u_1)=\deg_{X_1}(u_1)\ge d-1\ge 3$,
    Lemma~\ref{lem:bipartite} shows that $qu_1$ is removable in
    $X'-N_1'$. Hence, by Lemma~\ref{lem:assembly},
    $F=(M\setminus\{u_1v_1\})\cup N_1'\cup\{e,qu_1\}\cup N_2$ is a
    removable matching of $G$ of size $(m-1)+(a_1'+2)+a_2=m+a_1+a_2=d$,
    a contradiction.
\end{proof}

    Let $\deg_{X_i}(x_i)=d_i$, and $Y_i=X_i-x_i$ for $i=1,2$.

\begin{claim}\label{cla:leaf}
    For $i\in\{1,2\}$, if $d_i\ge 3$, then $Y_i$ is connected but not
    $2$-connected, the vertex $x_i$ is adjacent in $X_i$ to an interior
    vertex of every leaf block of $Y_i$, and $Y_i$ has exactly $d_i$
    leaf blocks.
\end{claim}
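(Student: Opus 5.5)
We work with $i=1$; the case $i=2$ is symmetric. Since $X_1$ is $2$-connected, $Y_1=X_1-x_1$ is connected. The plan has three steps: show $Y_1$ is not $2$-connected, show $x_1$ reaches the interior of every leaf block of $Y_1$, and then pin down the number of leaf blocks using Claim~\ref{cla:x_edge}.

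\textbf{Step 1: $Y_1$ is not $2$-connected.} Suppose instead that $Y_1$ is $2$-connected. Since $d_1\ge 3$, we have $\deg_{X_1}(x_1)\ge 3$. Pick any neighbor $z$ of $x_1$ in $X_1$ and look at $X_1-x_1z$. Deleting $x_1$ from it leaves the $2$-connected graph $Y_1$. Re-adding $x_1$ with its remaining $d_1-1\ge 2$ neighbors gives a $2$-connected graph by Observation~\ref{obs:add-vertex}. So $x_1z$ is removable in $X_1$, which contradicts Claim~\ref{cla:x_edge}.

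\textbf{Step 2: $x_1$ has a neighbor in the interior of every leaf block.} Let $B$ be a leaf block of $Y_1$ with cut-vertex $c$. If $x_1$ had no neighbor in $V(B)\setminus\{c\}$, then $c$ would separate $V(B)\setminus\{c\}$ from $x_1$ and the rest of $X_1$. That contradicts the $2$-connectivity of $X_1$.

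\textbf{Step 3: exactly $d_1$ leaf blocks.} Let the leaf blocks of $Y_1$ be $B_1,\dots,B_k$, and write $I_t$ for the interior of $B_t$. By Step 2, $k\le d_1$. We show every neighbor of $x_1$ is the unique neighbor of $x_1$ in some $I_t$. Take a neighbor $z$ of $x_1$ and suppose this fails. Then every $I_t$ contains a neighbor of $x_1$ other than $z$; this happens whether $z$ lies outside all the interiors or some $I_t$ contains two or more neighbors of $x_1$. Now $X_1-x_1z$ is exactly $Y_1$ with $x_1$ added back, and $x_1$ is still adjacent to an interior vertex of every leaf block. By Observation~\ref{obs:leaf_blocks}(a), with the new vertex adjacent to those interior vertices and possibly more (which is harmless, since adding edges preserves $2$-connectivity), $X_1-x_1z$ is $2$-connected. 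So $x_1z$ is removable, again contradicting Claim~\ref{cla:x_edge}. Hence each neighbor of $x_1$ lies in a distinct interior $I_t$ and is the only neighbor of $x_1$ there. Together with Step 2 this gives $d_1=k$.

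The main obstacle is Step 3. It needs the observation that Observation~\ref{obs:leaf_blocks}(a) still applies when the added vertex has extra neighbors; this holds because a spanning supergraph of a $2$-connected graph is $2$-connected. Note also that $k\ge 2$ holds automatically, because $Y_1$ is not $2$-connected and has at least two vertices (in fact $|V(Y_1)|\ge d$).
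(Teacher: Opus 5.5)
Your proof is correct and takes the paper's approach: Observation~\ref{obs:add-vertex} with Claim~\ref{cla:x_edge} for Step 1, the cut-vertex argument for Step 2, and Observation~\ref{obs:leaf_blocks}(a) applied to a spanning subgraph of $X_1-x_1z$ for the count. The only difference is in Step 3: you show that each neighbor of $x_1$ is the unique one in its leaf-block interior, whereas the paper picks one edge $f_j$ per leaf block and shows any extra edge would be removable. This is the same idea in different packaging.
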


    \begin{proof}
    We show the claim for $i=1$.
    If $Y_1$ is
    $2$-connected, then for any neighbor $z$ of $x_1$, the graph
    $X_1-x_1z$ is obtained from $Y_1$ by adding the vertex $x_1$ joined
    to $d_1-1\ge 2$ vertices, so it is $2$-connected by
    Observation~\ref{obs:add-vertex}. This implies $x_1z$ is removable in $X_1$, contradicting
    Claim~\ref{cla:x_edge}. Thus $Y_1$ is connected but not
    $2$-connected.

    Let $B$ be a leaf block of $Y_1$, and let $c$ be the cut-vertex of $Y_1$ in $B$.
    If there is no edge between $x_1$ and interior vertices of $B$ in $X_1$, 
    then $c$ is a cut-vertex of $X_1$ separating the interior of $B$ and $\{x_1\}$.
    Thus, there is an edge between $x_1$ and some interior vertex of $B$.

    Let $B_1,B_2,\ldots,B_{\ell}$ be the leaf blocks of $Y_1$. By the
    above argument, for each $j$ there is an edge $f_j$ between $x_1$
    and some interior vertex of $B_j$. Since the interiors of distinct
    leaf blocks are disjoint, the edges $f_1,\ldots,f_\ell$ are
    distinct, so $\ell\le d_1$. Suppose $\ell\le d_1-1$. Then there is
    an edge $f=x_1z\in E(X_1)\setminus\{f_1,\ldots,f_\ell\}$, and
    $X_1-f$ is a spanning supergraph of the graph obtained from $Y_1$
    by adding the vertex $x_1$ joined to one interior vertex of each
    leaf block of $Y_1$. Hence $X_1-f$ is $2$-connected by
    Observation~\ref{obs:leaf_blocks}(a), that is, $f$ is removable in
    $X_1$, contradicting Claim~\ref{cla:x_edge}. Therefore $\ell=d_1$.
\end{proof}

\begin{figure}[t]
\centering
\begin{tikzpicture}[scale=1.05,
    block/.style={draw, fill=gray!12},
    vtx/.style={circle, fill=black, inner sep=1.4pt},
    redge/.style={line width=1.6pt}]
    \draw[dashed, rounded corners=8pt] (-3.2,-0.95) rectangle (3.2,2.25);
    \node at (2.8,-0.7) {\small $Y_1$};
    \draw[block] (0,0) ellipse (1.75 and 0.62);
    \coordinate (c1) at (-1.125,0.475);
    \coordinate (c2) at (0,0.62);
    \coordinate (c3) at (1.125,0.475);
    \draw[block, rotate around={130:(-1.671,1.126)}] (-1.671,1.126) ellipse (0.9 and 0.55);
    \draw[block] (0,1.25) ellipse (0.85 and 0.58);
    \draw[block, rotate around={50:(1.671,1.126)}]   (1.671,1.126) ellipse (0.9 and 0.55);
    \node[vtx, label={[label distance=-3pt]-70:{\small $c_1$}}] at (c1) {};
    \node[vtx, label={[label distance=-2pt]-90:{\small $c_2$}}] at (c2) {};
    \node[vtx, label={[label distance=-3pt]250:{\small $c_3$}}] at (c3) {};
    \node[vtx] (t1) at (-2.0,1.5) {};
    \node[vtx] (t2) at (0,1.55) {};
    \node[vtx] (t3) at (2.0,1.5) {};
    \node[vtx] (r1a) at (-1.5,0.95) {}; \node[vtx] (r1b) at (-2.1,1.02) {};
    \draw[redge] (r1a) -- (r1b);
    \node[vtx] (r2a) at (-0.45,1.1) {}; \node[vtx] (r2b) at (0.45,1.1) {};
    \draw[redge] (r2a) -- (r2b);
    \node[vtx] (r3a) at (1.5,0.95) {}; \node[vtx] (r3b) at (2.1,1.02) {};
    \draw[redge] (r3a) -- (r3b);
    \node at (-2.85,0.72) {\small $B_1$};
    \node at (0.68,1.98)  {\small $B_2$};
    \node at (2.85,0.72)  {\small $B_3$};
    \node at (-1.72,0.62) {\scriptsize $R_1$};
    \node at (0,0.88) {\scriptsize $R_2$};
    \node at (1.72,0.62) {\scriptsize $R_3$};
    \node[vtx, label={[label distance=-2pt]90:{\small $x_1$}}] (x1) at (0,3.1) {};
    \draw (x1) -- (t1); \draw (x1) -- (t2); \draw (x1) -- (t3);
\end{tikzpicture}
\caption{The leaf blocks $B_1,\ldots,B_{d_1}$ of $Y_1=X_1-x_1$ and
removable matchings $R_j$ (bold) with $R_j\subseteq E(B_j-c_j)$.}

\label{fig:endgame}
\end{figure}
    By Claim~\ref{cla:interior}, either $x_1=x_2$, or the unique interior
block of $H$ is the edge $x_1x_2$. In the former case,
$d_1+d_2=\deg_H(x_1)=\deg_G(x_1)\ge d$, and in the latter case,
$d_i=\deg_H(x_i)-1\ge d-1$ for $i=1,2$. In either case, we may assume
$d_1\ge\bigl\lceil\tfrac{d}{2}\bigr\rceil\ge 3$ by relabeling the two sides
if necessary. By Claim~\ref{cla:leaf}, $Y_1$ has exactly $d_1$ leaf
blocks $B_1,\ldots,B_{d_1}$. See Figure~\ref{fig:endgame}.
Let $c_j$ be the cut-vertex of $Y_1$
contained in $B_j$.

Let $j\in\{1,\ldots,d_1\}$ and $v\in V(B_j)\setminus\{c_j\}$. All
neighbors of $v$ in $Y_1$ lie in $V(B_j)$, so
$\deg_{B_j}(v)=\deg_{Y_1}(v)$. Moreover, $\deg_{Y_1}(v)\ge d-1$ since if
$v\notin V(M)$, then $\deg_{Y_1}(v)\ge\deg_{X_1}(v)-1\ge d-1$ by Observation~\ref{obs:degree}, and if
$v\in V(M)$, then $x_1v\notin E(X_1)$, so
$\deg_{Y_1}(v)=\deg_{X_1}(v)\ge d-1$. In particular,
$|V(B_j)|\ge d\ge 3$, so $B_j$ is $2$-connected. Let $R_j$ be a maximum
removable matching of $B_j$ with $R_j\subseteq E(B_j-c_j)$. By
Lemma~\ref{lem:block}(ii), applied with $d-1\,(\ge 4)$ in place of $d$
and $W=\{c_j\}$, we have $2|R_j|+1\ge d-1$, so
$|R_j|\ge\bigl\lceil\tfrac{d-2}{2}\bigr\rceil$.

Let $R=R_1\cup\cdots\cup R_{d_1}$. Clearly, $R$ is a matching.
We claim that $X_1-R$ is
$2$-connected. The blocks of $Y_1-R$ are obtained from those of $Y_1$
by replacing each $B_j$ with the $2$-connected graph $B_j-R_j$, so
$Y_1-R$ has the same cut-vertices as $Y_1$, and its leaf blocks are
$B_1-R_1,\ldots,B_{d_1}-R_{d_1}$. By Claim~\ref{cla:leaf}, $x_1$ is
adjacent in $X_1$ to an interior vertex of each $B_j$, so $X_1-R$ is a
spanning supergraph of the graph obtained from $Y_1-R$ by adding a new
vertex adjacent to one interior vertex of each of its leaf blocks,
which is $2$-connected by Observation~\ref{obs:leaf_blocks}(a). Hence
$X_1-R$ is $2$-connected.

Therefore, by Lemma~\ref{lem:assembly} with $S_{X_1}=R$, $S_B=\emptyset$
for every other block $B$, and $T=\emptyset$, the matching $R$ is
removable in $G$. Since
\[
    |R|\ \ge\ d_1\Bigl\lceil\frac{d-2}{2}\Bigr\rceil
       \ \ge\ 3\Bigl\lceil\frac{d-2}{2}\Bigr\rceil\ \ge\ d
\]
for every $d\ge 5$, any $d$-submatching of $R$ is a removable
$d$-matching of $G$, a contradiction. This completes the proof of
Theorem~\ref{thm:main}. \qed

\section*{Acknowledgements}
The author was supported by the National Research Foundation of Korea (NRF) grant
funded by the Korea government (MSIT) (No.~RS-2025-00561867), and supported by
INHA UNIVERSITY Research Grant.

\bibliographystyle{abbrv}
\bibliography{ref}

@unpublished{kim,
  author = {Kim, Ringi},
  title  = {Removable matchings in $2$-connected graphs},
  note   = {Manuscript submitted for publication},
  year   = {2026}
}

@book{diestelbook,
  author    = {Diestel, Reinhard},
  title     = {Graph Theory},
  edition   = {Sixth},
  series    = {Graduate Texts in Mathematics},
  volume    = {173},
  publisher = {Springer, Berlin},
  year      = {2025}
}

@article{lzfm,
  title   = {From {H}alin's edge removability to matching removability in $k$-connected graphs},
  author  = {Li, Hengzhe and Zhou, Mingming and Fujita, Shinya and Mao, Yaping},
  journal = {arXiv preprint arXiv:2605.24035},
  year    = {2026}
}

@article{cj,
  title   = {Removable trees and matchings in $k$-connected and $k$-edge-connected graphs},
  author  = {Clay, Adam D. W. and Jord{\'a}n, Tibor},
  journal = {arXiv preprint arXiv:2608.03643},
  year    = {2026}
}

@article {halin,
    AUTHOR = {Halin, R.},
     TITLE = {A theorem on {$n$}-connected graphs},
   JOURNAL = {J. Combinatorial Theory},
  FJOURNAL = {Journal of Combinatorial Theory},
    VOLUME = {7},
      YEAR = {1969},
     PAGES = {150--154},
      ISSN = {0021-9800},
   MRCLASS = {05.40},
  MRNUMBER = {248042},
MRREVIEWER = {I.\ Z.\ Bouwer},
}

@article {hasunuma,
    AUTHOR = {Hasunuma, Toru},
     TITLE = {Connectivity preserving trees in {$k$}-connected or
              {$k$}-edge-connected graphs},
   JOURNAL = {J. Graph Theory},
  FJOURNAL = {Journal of Graph Theory},
    VOLUME = {102},
      YEAR = {2023},
    NUMBER = {3},
     PAGES = {423--435},
      ISSN = {0364-9024,1097-0118},
   MRCLASS = {05C40 (05C05)},
  MRNUMBER = {4563199},
MRREVIEWER = {Stephen\ G.\ Hartke},
       DOI = {10.1002/jgt.22878},
       URL = {https://doi.org/10.1002/jgt.22878},
}

@article{chu,
  title   = {A sharp extension of {H}alin's removable-edge theorem to matchings},
  author  = {Chu, Hojin},
  journal = {arXiv preprint arXiv:2608.09394},
  year    = {2026}
}

@article{ckp,
  title={Minimum degree conditions for removable matchings in $k$-connected graphs},
  author={Chu, Hojin and Kim, Ringi and Park, Boram},
  journal={arXiv preprint arXiv:2607.17533},
  year={2026}
}

@article {ckl,
    AUTHOR = {Chartrand, Gary and Kaugars, Agnis and Lick, Don R.},
     TITLE = {Critically {$n$}-connected graphs},
   JOURNAL = {Proc. Amer. Math. Soc.},
  FJOURNAL = {Proceedings of the American Mathematical Society},
    VOLUME = {32},
      YEAR = {1972},
     PAGES = {63--68},
      ISSN = {0002-9939,1088-6826},
   MRCLASS = {05C99},
  MRNUMBER = {290999},
MRREVIEWER = {L.\ H.\ Harper},
       DOI = {10.2307/2038307},
       URL = {https://doi.org/10.2307/2038307},
}

@article {fk,
    AUTHOR = {Fujita, Shinya and Kawarabayashi, Ken-ichi},
     TITLE = {Connectivity keeping edges in graphs with large minimum
              degree},
   JOURNAL = {J. Combin. Theory Ser. B},
  FJOURNAL = {Journal of Combinatorial Theory. Series B},
    VOLUME = {98},
      YEAR = {2008},
    NUMBER = {4},
     PAGES = {805--811},
      ISSN = {0095-8956,1096-0902},
   MRCLASS = {05C40},
  MRNUMBER = {2418773},
MRREVIEWER = {Akira\ Saito},
       DOI = {10.1016/j.jctb.2007.11.001},
       URL = {https://doi.org/10.1016/j.jctb.2007.11.001},
}

@article {hl,
    AUTHOR = {Hong, Yanmei and Liu, Qinghai},
     TITLE = {{M}ader's conjecture for graphs with small connectivity},
   JOURNAL = {J. Graph Theory},
  FJOURNAL = {Journal of Graph Theory},
    VOLUME = {101},
      YEAR = {2022},
    NUMBER = {3},
     PAGES = {379--388},
      ISSN = {0364-9024,1097-0118},
   MRCLASS = {05C75 (05C05 05C35)},
  MRNUMBER = {4512138},
MRREVIEWER = {Audace\ Amen V. Dossou-Olory},
       DOI = {10.1002/jgt.22831},
       URL = {https://doi.org/10.1002/jgt.22831},
}

@article {llh,
    AUTHOR = {Liu, Haiyang and Liu, Qinghai and Hong, Yanmei},
     TITLE = {Connectivity keeping trees in 3-connected or 3-edge-connected
              graphs},
   JOURNAL = {Discrete Math.},
  FJOURNAL = {Discrete Mathematics},
    VOLUME = {346},
      YEAR = {2023},
    NUMBER = {12},
     PAGES = {Paper No. 113679, 4},
      ISSN = {0012-365X,1872-681X},
   MRCLASS = {05C40},
  MRNUMBER = {4634204},
       DOI = {10.1016/j.disc.2023.113679},
       URL = {https://doi.org/10.1016/j.disc.2023.113679},
}

@article {mader-path,
    AUTHOR = {Mader, W.},
     TITLE = {Connectivity keeping paths in {$k$}-connected graphs},
   JOURNAL = {J. Graph Theory},
  FJOURNAL = {Journal of Graph Theory},
    VOLUME = {65},
      YEAR = {2010},
    NUMBER = {1},
     PAGES = {61--69},
      ISSN = {0364-9024,1097-0118},
   MRCLASS = {05C40},
  MRNUMBER = {2682514},
MRREVIEWER = {Shinya\ Fujita},
       DOI = {10.1002/jgt.20465},
       URL = {https://doi.org/10.1002/jgt.20465},
}

@article {mader-tree,
    AUTHOR = {Mader, W.},
     TITLE = {Connectivity keeping trees in {$k$}-connected graphs},
   JOURNAL = {J. Graph Theory},
  FJOURNAL = {Journal of Graph Theory},
    VOLUME = {69},
      YEAR = {2012},
    NUMBER = {3},
     PAGES = {324--329},
      ISSN = {0364-9024,1097-0118},
   MRCLASS = {05C40 (05C05)},
  MRNUMBER = {2898871},
MRREVIEWER = {Peter\ Dankelmann},
       DOI = {10.1002/jgt.20585},
       URL = {https://doi.org/10.1002/jgt.20585},
}

@article {tm,
    AUTHOR = {Tian, Yingzhi and Meng, Jixiang},
     TITLE = {A survey on the vertex-(edge-){$k$}-maximal graphs and the
              {$k$}-vertex-(edge-)connected graphs with redundant subgraphs},
   JOURNAL = {Discrete Appl. Math.},
  FJOURNAL = {Discrete Applied Mathematics. The Journal of Combinatorial
              Algorithms, Informatics and Computational Sciences},
    VOLUME = {378},
      YEAR = {2026},
     PAGES = {125--135},
      ISSN = {0166-218X,1872-6771},
   MRCLASS = {05C40},
  MRNUMBER = {4933151},
       DOI = {10.1016/j.dam.2025.07.008},
       URL = {https://doi.org/10.1016/j.dam.2025.07.008},
}

@article {yt,
    AUTHOR = {Yang, Qing and Tian, Yingzhi},
     TITLE = {Connectivity keeping edges of trees in 3-connected or
              3-edge-connected graphs},
   JOURNAL = {Discrete Math.},
  FJOURNAL = {Discrete Mathematics},
    VOLUME = {347},
      YEAR = {2024},
    NUMBER = {5},
     PAGES = {Paper No. 113913, 5},
      ISSN = {0012-365X,1872-681X},
   MRCLASS = {05C40 (05C05)},
  MRNUMBER = {4702527},
MRREVIEWER = {Yan\ Zhao},
       DOI = {10.1016/j.disc.2024.113913},
       URL = {https://doi.org/10.1016/j.disc.2024.113913},
}

\end{document}